\newcommand{\Z}{{\mathbb Z}}
\newcommand{\N}{{\mathbb N}}
\newcommand{\cO}{{\mathcal O}}
\newcommand{\cU}{{\mathcal U}}
\newcommand{\cG}{{\mathcal G}}
\newcommand{\Per}{{\rm Per}}
\newtheorem{thm}{Theorem}[section]
\newtheorem{cor}[thm]{Corollary}
\newtheorem{lem}[thm]{Lemma}
\newtheorem{definition}[thm]{Definition}
\newtheorem{example}[thm]{Example}
\newtheorem{remark}[thm]{Remark}
\theoremstyle{definition}
\begin{document}
\title[Continuous Cocycle Superrigidity for Shifts and Groups with One End]{Continuous Cocycle Superrigidity for Shifts and Groups with One End}

\author{Nhan-Phu Chung}
\address{Nhan-Phu Chung, Department of Mathematics, Sungkyunkwan University, Suwon 440-746, Korea.} 
\email{phuchung@skku.edu;phuchung82@gmail.com}
\author{Yongle Jiang}
\address{Yongle Jiang, Department of Mathematics, SUNY at Buffalo, NY 14260-2900, U.S.A.}
\email{yongleji@buffalo.edu}
\date{\today}
\maketitle
\begin{abstract}
In this article, we prove that if a finitely generated group $G$ is not torsion then a necessary and sufficient condition for every full shift over $G$ has (continuous) cocycle superrigidity is that $G$ has one end. It is a topological version of the well known Popa's measurable cocycle superrigidity theorem \cite{Popa2}. For the proof of sufficient condition, we introduce a new specification property for shifts over general groups which play a similar role as malleable property in the measurable setting. This new specification property is good enough for us to extend the method using homoclinic equivalence relation that was introduced by Klaus Schmidt to study cocycle rigidity for $\Z^d$-shifts \cite{Schmidt95}. Indeed, in this direction we prove this superrigidity result for more certain general systems. And for the converse, we apply Specker's characterization for ends of groups via the associated first cohomology groups to get the result. Finally, combining our results with \cite[Theorem 1.6]{Lixin}, we have an application in continuous orbit equivalence rigidity.
\end{abstract}
\section{Introduction}

Since its introduction \cite{Dye,Singer}, measurable orbit equivalence has received a tremendous attention not only from ergodic theorist but also analysts working on operator algebras. It is well-known that in the measurable setting (see e.g.\cite[Section 3.3]{Furman2}, \cite[Theorem 5.6]{Popa2}, \cite[Sections 4.2.9 and 4.2.11]{Zimmer}) it is a general principle that, roughly speaking, cocycle superrigidity implies orbit equivalence superrigidity.  Recently, using this principle and deformation/rigidity techniques in von Neumann algebras, many rigidity results for measurable orbit equivalence were established for certain actions of a large class of non-amenable groups \cite{Ioana, Popa2, Popa2008}. One of the breakthrough papers in this direction is Popa's measurable orbit equivalence superrigidity results which were obtained via his celebrated measurable cocycle superrigidity theorem \cite{Popa1,Popa2, Popa2008}.

A subgroup $G_0$ of $G$ is wq-normal if there exists no subgroup $G_0\leq H \lvertneqq G$ with $gHg^{-1}\cap H$ is finite for all $g\in G\setminus H$.

\textbf{Popa's measurable cocycle superrigidity theorem} \cite{Popa2,Popa2008} (for full shifts): Let $G$ be a group that contains an infinite wq-normal subgroup $G_0$ such that  the pair $(G,G_0)$ has relative property $(T)$, or such that $G_0$ is the direct product of an infinite group and a non-amenable group. Then the full shift action $G\curvearrowright\prod_{G}(A,\mu)$ is $\cU_{fin}$-measurable cocycle superrigid, for every finite set $A$.

Here $\cU_{fin}$ is the class of Polish groups which arise as closed subgroups of the unitary groups of finite von Neumann algebras. The class $\cU_{fin}$ contains compact Polish groups and countable groups.

On the other side, the interplay between topological orbit equivalence and $C^*$-algebras has been investigated extensively in the last 20 years. Although some fundamental progress in continuous orbit equivalence has been made just for some special cases, it already has had many applications in groupoids, homology theory, classification of C*-algebras, and K-theory \cite{BCW,BH1,BT,GPS2008,GPS2010,GPS,JM,Lixin,MM2014a,MM2014b,
 Matui2012,Matui2013,Matui2015,MST}. Similar to the measurable setting, continuous cocycles of a continuous group action are a fundamental tool for understanding the action. And it is natural to ask whether there is a continuous version of Popa's measurable cocycle superrigidity theorem and how one can apply it to study rigidity of continuous orbit equivalence.
 
We denote by $\mathcal{G}$ the class of finitely generated infinite discrete group $G$ such that the full shift $(G, A^G)$ is continuous $H$-cocycle rigid for every finite set $A$ and every countable discrete group $H$, i.e. every continuous cocycle $c: G\times A^G\to H$ is continuously cohomologous to a group homomorphism $\phi: G\to H$. Which group $G$ belongs to $\mathcal{G}$?
 
In \cite{Kam1, Kam2}, during studying classification of the isometric extensions of $\Z^d$- Bernoulli shifts, J. W. Kammeyer implicitly proved that when $d\geq 2$, every continuous cocycle for the shift action of $\Z^d$ on the full $d$-dimensional $k$-shift with values in $\Z/2\Z$ is continuously cohomologous to a homomorphism from $\Z^d$ to $\Z/2\Z$, i.e. the full shift $(\Z^d, A^{\Z^d})$ is continuous $\Z/2\Z$-cocycle rigid for all finite set $A$. Later, K. Schmidt proved that when $d\geq 2$, the full shift $(\Z^d, A^{\Z^d})$ is continuous $H$-cocycle rigid for every finite set $A$ and every discrete group $H$; hence $\Z^d$ belongs to $\mathcal{G}$ when $d\geq 2$ \cite{Schmidt95}. As far as we know, these were the only known groups in $\mathcal{G}$. 

Last year, Xin Li proved that the full shift $(G, A^G)$ is continuous $H$-cocycle rigid when the acting group $G$ is a torsion-free, duality group with cohomological dimension not equal to one and the target group $H$ is solvable \cite[Theorem 1.7]{Lixin}.

Inspired by the work of Li \cite{Lixin}, Popa \cite{Popa2,Popa2008}, and Schmidt \cite{Schmidt95}, in this article we give a necessary and sufficient condition for a non-torsion group $G$, i.e. $G$ contains at least one element with infinite order, to be a member in $\cG$ via the ends of the group, an important notion in geometric group theory. 
  
Now, we state our main results. From now on the acting group $G$ is always finitely generated. First, we establish continuous cocycle rigidity result for certain shifts of finite type.  
\begin{thm}\label{T-one end is sufficient}
Let $A$ be a finite set and $G$ a non-torsion group. Let $\sigma$ be the left shift action of $G$ on $A^G$ and $X\subset A^G$ be a subshift. Assume that $\sigma$ is topological mixing, and the homoclinic equivalence relation $\Delta_X$ has strong $a$-specification, for every element $a$ in $G$ with infinite order. If $G$ has one end then every continuous cocycle $c:G\times X\to H$ on every countable group $H$ is cohomologous to a homomorphism.  
\end{thm}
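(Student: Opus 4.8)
The plan is to adapt Klaus Schmidt's homoclinic-equivalence-relation method for $\Z^d$-shifts to an arbitrary one-ended group $G$, with the strong $a$-specification of $\Delta_X$ playing the role that lattice-geometric amalgamation plays over $\Z^d$, and with Specker's description of ends supplying the final global step.

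\emph{Step 1: local structure of the cocycle.} Fix a finite symmetric generating set $S$ of $G$. Since $X\subseteq A^G$ is a subshift over a finite alphabet — hence compact and totally disconnected — and $H$ is discrete, each map $x\mapsto c(s,x)$ with $s\in S$ is locally constant, so there is a finite $F\subseteq G$ with $e\in F$ such that $c(s,x)$ depends only on $x|_F$. By the cocycle identity this propagates: for every $g\in G$ the value $c(g,x)$ depends only on $x|_{F_g}$ for some finite $F_g$. From here on everything is bookkeeping with finite windows.

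\emph{Step 2: trivializing $c$ along the homoclinic relation (the technical core).} As $G$ is non-torsion, fix $a\in G$ of infinite order. The aim is a continuous $b\colon X\to H$ for which the cohomologous cocycle $c'(g,x):=b(\sigma^g x)\,c(g,x)\,b(x)^{-1}$ is homoclinically invariant, i.e.\ $c'(g,x)=c'(g,y)$ whenever $(x,y)\in\Delta_X$. To build $b$, one first attaches to each homoclinic pair $(x,y)$ a value in $H$: the difference set $\{g:x_g\neq y_g\}$ is finite, and strong $a$-specification for $\Delta_X$ lets us amalgamate, inside $X$ and inside a single homoclinic class, configurations agreeing with $x$ on an arbitrarily large region in the $\langle a\rangle$-direction and with a shift $\sigma^{a^n}y$ on the complement, separated by a bounded buffer; feeding such interpolants through $c$ and using topological mixing to absorb the buffer yields a continuous groupoid cocycle $\beta\colon\Delta_X\to H$. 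One then shows $\beta$ is a continuous coboundary, $\beta(x,y)=b(x)b(y)^{-1}$, and that this $b$ conjugates $c$ to a homoclinically invariant $c'$. This is precisely where Schmidt's concrete $\Z^d$-gluings are replaced by the abstract specification hypothesis, and it is the main obstacle: one must distill from "strong $a$-specification of $\Delta_X$" a clean construction of a genuinely continuous transfer function over a general one-ended $G$, where there is no lattice in which to push patterns around.

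\emph{Step 3: from homoclinic invariance to a homomorphism, via one end.} It remains to upgrade homoclinic invariance of $c'$ to independence of $x$. By the finite-window description of Step 1, the failure of $c'(g,\cdot)$ to be constant is encoded by a first cohomology class of $G$ with coefficients in groups of finitely supported $H$-valued functions on $G$; after reducing to cyclic (abelian) subquotients of $H$ this becomes a question about $H^1(G,\Z G)$ and $H^1(G,\F_pG)$. Specker's theorem identifies one-endedness of the infinite group $G$ with $H^1(G,\Z G)=0$; applying this (and its analogues for the relevant coefficient modules) makes these obstruction classes vanish, so $c'(g,x)=:\phi(g)$ does not depend on $x$. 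The cocycle identity for $c'$ then reads $\phi(gh)=\phi(g)\phi(h)$, so $\phi\colon G\to H$ is a homomorphism, and unwinding the definition of $c'$ gives $c(g,x)=b(\sigma^g x)^{-1}\phi(g)\,b(x)$; thus $c$ is cohomologous to the homomorphism $\phi$, completing the proof. A secondary difficulty in this step is that $H$ need not be abelian or finite, which is what forces the reduction of the Specker-type vanishing to abelian coefficients.
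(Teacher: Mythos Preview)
Your Step~2 omits the central use of one-endedness, and your Step~3 invokes the wrong tool. In the paper, the one-end hypothesis enters \emph{geometrically}, not cohomologically, and it enters already in the construction of $b$. For a non-torsion $a$ one defines two explicit homoclinic cocycles
\[
c_{f_a}^{(a),+}(x,y)=\lim_{n\to\infty}c(a^n,x)^{-1}c(a^n,y),\qquad
c_{f_a}^{(a),-}(x,y)=\lim_{n\to\infty}c(a^{-n},x)^{-1}c(a^{-n},y),
\]
and the crucial fact is $c^{(a),+}=c^{(a),-}$ on $\Delta_X$. This equality is proved by connecting $a^{-N}$ to $a^{N}$ by a path in the Cayley graph that stays outside a large ball---precisely the defining property of having one end---and expanding $c(a^N,x)$ along that path via the cocycle identity. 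Strong $a$-specification is used only \emph{after} this equality, to show the common cocycle tends to $e_H$ at the diagonal, which is what makes the transfer map $b(x)=\lim c^{(a),+}(x_n,\bar x)$ continuous. Your Step~2 never invokes one-endedness, so as written it cannot produce a well-defined continuous $b$: without $c^{+}=c^{-}$ the two natural candidates for your $\beta$ disagree, and this is exactly where $\Z$ (two ends) fails.

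Your Step~3 is then on the wrong track. Specker's identification of ends with $H^1(G,\Z G)$ is used in the paper only for the \emph{converse} direction, to manufacture a nontrivial cocycle when $G$ has more than one end. In the sufficiency direction nothing cohomological happens: once $b$ trivializes $c$ along each non-torsion $a$, the same Cayley-graph path argument shows that the $b$'s built from different non-torsion elements coincide, so $c$ is trivialized on the normal subgroup $G_\infty$ generated by all infinite-order elements; then \emph{topological mixing} (via a normalizer lemma), not Specker, is what passes triviality from $G_\infty$ to $N_G(G_\infty)=G$. Your proposed reduction to abelian subquotients of $H$ and appeal to $H^1(G,\Z G)$, $H^1(G,\F_pG)$ has no clear mechanism: homoclinic invariance of $c'(g,\cdot)$ does not encode a $1$-cocycle on $G$ with $\Z G$-coefficients in any evident way, and the claimed reduction to abelian targets is unjustified for arbitrary discrete $H$.
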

Second, we illustrate that for a finitely generated group $G$ (not necessary non-torsion), one end is a necessary condition for $G$ belonging to $\cG$. 

\begin{thm}\label{one end is necessary}
If $G$ has more than one end, then $(G, (\frac{\Z}{2\Z})^G)$ is not continuous $\frac{\Z}{2\Z}$-cocycle rigid.
\end{thm}
As a consequence, we get
\begin{cor}\label{main theorem}
If $G$ is (finitely generated and) not a torsion group, then $G\in\mathcal{G}$ if and only if $G$ has one end.
\end{cor}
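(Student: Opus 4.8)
The plan is to read off Corollary~\ref{main theorem} from Theorems~\ref{T-one end is sufficient} and~\ref{one end is necessary}, using only the elementary structure theory of ends. Since $G$ is non-torsion it contains an element of infinite order, hence is infinite; being finitely generated and infinite it has, by the Freudenthal--Hopf theorem, either one, two, or infinitely many ends, so in this setting ``$G$ has one end'' is exactly the negation of ``$G$ has more than one end.'' It therefore suffices to prove the two implications separately.

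For ``one end $\Rightarrow G\in\cG$'': fix an arbitrary finite set $A$ and countable group $H$, and apply Theorem~\ref{T-one end is sufficient} to the subshift $X=A^G$, i.e. the full shift itself. Two hypotheses must be verified. First, the left shift on $A^G$ is topologically mixing: prescribed patterns on two finite subsets of $G$ can be realized simultaneously by a single configuration once one of the patterns is translated by a sufficiently large group element so that the supports become disjoint, which is possible precisely because there are no forbidden patterns. Second, the homoclinic equivalence relation $\Delta_{A^G}$ has strong $a$-specification for every $a\in G$ of infinite order: two $\Delta_{A^G}$-related points differ only on a finite set, and on a full shift one may freely overwrite coordinates on disjoint finite regions, so the ``active'' parts of finitely many homoclinic points can be spliced together along pairwise far-separated translates by powers of $a$. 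With these in hand, Theorem~\ref{T-one end is sufficient} gives that every continuous cocycle $c:G\times A^G\to H$ is cohomologous to a homomorphism, i.e. $(G,A^G)$ is continuous $H$-cocycle rigid; as $A$ and $H$ were arbitrary, $G\in\cG$.

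For ``$G\in\cG\Rightarrow$ one end'': argue by contraposition. If $G$ does not have one end then, being infinite and finitely generated, it has more than one end, and Theorem~\ref{one end is necessary} exhibits a continuous cocycle $G\times(\Z/2\Z)^G\to\Z/2\Z$ that is not cohomologous to any homomorphism; hence $(G,(\Z/2\Z)^G)$ is not continuous $\Z/2\Z$-cocycle rigid and $G\notin\cG$.

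The only point requiring any care is the verification that the full shift meets the hypotheses of Theorem~\ref{T-one end is sufficient}; both the mixing property and the strong $a$-specification of $\Delta_{A^G}$ reduce to the fact that on a full shift coordinates on disjoint finite regions may be assigned independently, so---once the precise definition of strong $a$-specification is unwound---this step presents no genuine obstacle. All of the substance of the corollary is already carried by Theorems~\ref{T-one end is sufficient} and~\ref{one end is necessary}.
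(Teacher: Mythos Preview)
Your proposal is correct and follows essentially the same route as the paper: the paper's proof simply cites Theorems~\ref{T-one end is sufficient} and~\ref{one end is necessary} together with Lemma~\ref{full shift has specification}, the latter being exactly the verification that the full shift has strong $a$-specification for every non-torsion $a$. Your informal sketch of that verification (and of topological mixing) is accurate, though note that the precise definition of strong $a$-specification involves splicing just two homoclinic points along the forward and backward $a$-tubes rather than ``finitely many'' along separated translates.
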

We recall that motivated by his measurable cocycle superrigidity theorem, Popa asked the question whether $G$ satisfies measurable cocycle superrigidity theorem for Bernoulli shift actions if and only if (the nonamenable group) $G$ has zero first $\ell^2$-Betti number \cite[p. 251]{petersonsinclair}. For results on this question, we refer the reader to \cite{petersonsinclair}. Note that when a nonamenable group has zero first $\ell^2$-Betti number, then it has one end (see Example \ref{E-groups with one end}). Thus, our result might be considered as a solution to the counterpart of Popa's question in the continuous setting.  

Now let us describe briefly our main ideas in the proof of Theorem \ref{T-one end is sufficient}. 
Suppose we are given a continuous cocycle $c: G\times X\to H$, where $X\subset A^G$ is a subshift for some finite set $A$, and we want to show 
there exist a continuous map $b: X\to H$ and a group homomorphism $\phi: G\to H$ such that 
\begin{align}
\label{E-1}
c(g, x)=b(gx)\phi(g)b(x)^{-1},
\end{align} for all $g\in G$ and all $x\in X$. The key problem is to construct the ``mysterious" transfer map (also called untwister) $b$. Similar to Popa's approach in \cite{Popa2} for the measurable setting, we need to start at good element ``g'' in $G$ and find solutions $b$ of the equation (\ref{E-1}) in this direction $g$. After that, with certain nice property of the action on the direction $g$ we will get a universal solution $b$ for the whole group $G$. Specification property and one-ended groups have a such nice property as malleable actions and property (T) groups did in the superrigidity for the measurable setting.

In the first step, we start with some non-torsion element $g$ in $G$ and all $x\in X$. If $\phi(g)=b(gx)^{-1}c(g, x)b(x)$ for all $x$ in $X$, then $b(gx)^{-1}c(g, x)b(x)=b(gx')^{-1}c(g, x')b(x')$ for all $(x, x')\in X\times X$. 

Denote by $f(g^kx)=c(g, g^kx)$ for all $k\in\Z$, then the above equality implies $b(g^{k+1}x)b(g^{k+1}x')^{-1}=f(g^kx)b(g^kx)b(g^kx')^{-1}f(g^kx')^{-1}$ for all $(x, x')\in X\times X$ and all $k\in\Z$, which can be viewed as a recurrence relation, then repeating substitution, we get 
\begin{eqnarray*}
b(x)b(x')^{-1}&=&(\prod_{j=0}^kf(g^jx)^{-1})b(g^{k+1}x)b(g^{k+1}x')^{-1}(\prod_{j=0}^kf(g^jx')^{-1})^{-1},\\ 
b(x)b(x')^{-1}&=&(\prod_{j=1}^kf(g^{-j}x))b(g^{-k}x)b(g^{-k}x')^{-1}(\prod_{j=1}^kf(g^{-j}x'))^{-1} 
\end{eqnarray*}
for all $k\geq 0$ and all $(x, x')\in X\times X$.   

Then, the continuity of $b$ implies that when $x, x'$ have the same ``tails" (i.e. they lie in the homoclinic equivalence relation) , then $b(g^{k+1}x)b(g^{k+1}x')^{-1}=e=b(g^{-k}x)b(g^{-k}x')^{-1}$ for large enough $k$. So the above equality implies for large enough $k$, we must have this necessary condition $$b(x)b(x')^{-1}=(\prod_{j=0}^kf(g^jx)^{-1})(\prod_{j=0}^kf(g^jx')^{-1})^{-1}=(\prod_{j=1}^kf(g^{-j}x))(\prod_{j=1}^kf(g^{-j}x'))^{-1}.$$ 
If we further fix $x'$, then, $b(x)$ is already determined by these relations; however, in general, there is no reason to expect 
for large enough $k$, 
\begin{align}
\label{E-2}
(\prod_{j=0}^kf(g^jx)^{-1})(\prod_{j=0}^kf(g^jx')^{-1})^{-1}=(\prod_{j=1}^kf(g^{-j}x))(\prod_{j=1}^kf(g^{-j}x'))^{-1},
\end{align}
so the difficulties lie in finding conditions on $G$ to make this equality holds and make the above ``argument" actually work for all $x\in X$. We prove that $G$ being (non-torsion) one-ended is the key condition to get the necessary condition (\ref{E-2}). This is done in Corollary \ref{L-coincidence of plus and minus Livsic's test cocycles}, Lemmas \ref{L-plus and minus cocycles equal imply the extension of cocycle}, and \ref{L-specification and equal of cocycles imply directional triviality of cocycles}. 

In the second step, we will look for a good element $g\in G$ such that from (\ref{E-2}) we will get a solution $b$ of the equation (\ref{E-1}) in the direction $g$. To do that, we introduce another crucial terminology of our work: specification property for shift actions. Specification property which was introduced by Rufus Bowen in 1970s has played an important role in studying hyperbolic dynamical systems. After that, various specification properties have been introduced by many other authors \cite{CL,CT,CT1,LS,Pavlov,Thompson}. For the cocycle superrigidity phenomena of $\Z^d$-shifts, Schmidt introduced another modified version of specification property \cite{Schmidt95}. His definition depends heavily on the Euclidean structure of $\Z^d$ and hardly to extend to general groups because of the lack of inner product. Using geometric group theory idea, we can overcome this difficulty to define a totally new version of specification property for group actions. And it is powerful enough for us to finish the task in this second step.

And in the final step, we show that when $G$ has one end, we can get a common solution $b$ for all non-torsion element $``g"$ we used in the second step, and in fact this transfer map $b$ also works for the whole group $G$. It follows from Lemmas \ref{cocycle triviality passes to normalizer} and \ref{trivial cocycles in two directions generates a subgroup}.



The paper is organized as following: in Section 2, we study continuous cocycle of subshifts via homoclinic equivalence relation as Schmidt did when the acting group is $\Z^d$ \cite{Schmidt95}. In Section 3, we introduced two new versions of specification properties for subshifts over general groups. These properties play crucial roles in our paper. In Section 4, we combine preparation results in Sections 2 and 3 to give a proof for Theorem \ref{T-one end is sufficient}, and we provide the generalized golden mean subshifts as a rich source for Theorem \ref{T-one end is sufficient}.
In Section 5, we present proofs of Theorem \ref{one end is necessary} and Corollary \ref{main theorem} by using a cohomological characterization of numbers of ends of groups in \cite{Dunwoody, Houghton, Specker}. Note that this characterization of ends of groups recently was also used to study in a different topic: ergodicity of principal algebraic actions  \cite{LiPetersonSchmidt}. Finally, we combine our results with \cite[Theorem 1.6]{Lixin} to  obtain continuous orbit equivalence rigidity of some actions of certain torsion-free amenable groups.

Some of results and techniques also hold when the target group has a compatible bi-invariant metric, meaning that they also work for groups in $\cU_{fin}$. For simplicity, in this article, we only deal with subshifts and discrete target groups. 

\section{Continuous cocyle, ends of groups, and homoclinic equivalence relation}\label{section on terminology}

\subsection{Cocycle rigidity}

Let $G$ be a countable group and act on a compact topological space $X$ by homeomorphisms and we denote this dynamical system by $(G,X)$. Let $H$ be a topological group.
\begin{definition}
A continuous $H$-cocycle for the system $(G,X)$ is a map $c:G\times X\to H$ such that $c(g,\cdot):X\to H$ is continuous for every $g\in G$ and $$c(g_1g_2,x)=c(g_1,g_2x)c(g_2,x), $$
for all $g_1,g_2\in G, x\in X$.
\end{definition}
If $\rho:G\to H$ is a homomorphism then clearly the map $G\times X \to H, (g,x)\mapsto \rho(g)$ which we still denote by $\rho$ is a continuous cocycle.

Let $b:X\to H$ be a continuous map and $c:G\times X\to H$ be a continuous $H$-cocycle then the map $c':G\times X\to H$ defined by $c'(g,x):=b(gx)^{-1}c(g,x)b(x)$ is also a continuous $H$-cocycle.

\begin{definition}
Two continuous $H$-cocycles $c,c':G\times X\to H$ are cohomologous if there exists a continuous map $b:X\to H$ called a transfer map such that $c(g,x)=b(gx)^{-1}c'(g,x)b(x)$ for every $x\in X,g\in G$.

We say a continuous $H$-cocycle $c: G\times X\to H$ is trivial if it is cohomologous to a homomorphism from $G$ to $H$.
\end{definition}

\begin{definition}\label{definition of cocycle-rigid}
A system $(G,X)$ is called continuous $H$-cocycle rigid if every continuous $H$-cocycle $c:G\times X\to H$ is cohomologous to a homomorphism $\rho:G\to H$. We say the system $(G,X)$ is superrigid if $(G,X)$ is continuous $H$-cocycle rigid for every discrete group $H$.
\end{definition}

A metric $d$ on a topological group $H$ is \textit{bi-invariant} if $d(gx,gy)=d(x,y)=d(xg,yg)$, for every $x,y,g\in H$. Note that if $d$ is a bi-invariant metric on $H$ then $d(x_1\cdots x_n,y_1\cdots y_n)\leq \sum_{i=1}^nd(x_i,y_i)$, for every $n\in\N, x_1,\cdots, x_n, y_1, \cdots, y_n\in H$.

\begin{definition}
The dynamical system $(G,X)$ is \textit{topological mixing} if $G$ is infinite and for every two non-empty open subsets $U, V$ of $X$, there exists a finite subset $F$ of $G$ such that $gU\cap V\neq \varnothing $ for every $g\in G\setminus F$. It is \textit{topological mixing of order $r$} if $G$ is infinite and for all nonempty open subsets $U_1, \cdots, U_r$ of $X$, there exists a finite subset $F$ of $G$ such that $\cap_{1\leq i\leq r}g_iU_i\neq \varnothing $ whenever $g_1, \cdots, g_r$ are elements of $G$ such that $g_i^{-1}g_j\not\in F$ for all $1\leq i<j\leq r$.
\end{definition}

The measurable version of the following lemma is well-known, see e.g. \cite[Lemma 3.5]{Furman},  \cite[Lemma 3.6]{Popa2}. Its proof is inspired by \cite[Theorem 3.2]{Schmidt95}.

\begin{lem}
\label{cocycle triviality passes to normalizer}
Let $G$ be a countable group and $\alpha$ be an action of $G$ on a compact metrizable space $X$ by homeomorphisms. Let $H$ be a topological group with a bi-invariant metric $d$ on $H$. Let $c:G\times X\to H$ be a continuous $H$-cocycle. Assume that there exists a subgroup $G_0<G$ and a continuous map $b:X\to H$ such that for any fixed $g$ in $G_0$ the map $x\mapsto b(gx)^{-1}c(g,x)b(x)$ is constant on $X$, and $\alpha|_{G_0}$ is topological mixing. Then $c: N_G(G_0)\times X\to H$ is cohomologous to a homomorphism, where $N_G(G_0)$ is the normalizer of $G_0$ in $G$.
\end{lem}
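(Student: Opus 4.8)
The plan is to use the given transfer map $b$ to normalise $c$ along $G_0$, to propagate this normalisation to the whole normaliser via the cocycle identity, and then to invoke topological mixing of $\alpha|_{G_0}$ to kill the remaining oscillation. \emph{First}, I would pass to the cohomologous cocycle $c_1(g,x):=b(gx)^{-1}c(g,x)b(x)$; by hypothesis $c_1(g,\cdot)$ is constant on $X$ for each $g\in G_0$, say with value $\phi(g)\in H$, and the cocycle identity restricted to $G_0$ forces $\phi\colon G_0\to H$ to be a homomorphism. Since being cohomologous is an equivalence relation it is enough to treat $c_1$, so I rename $c_1$ as $c$ and assume $c(g,\cdot)\equiv\phi(g)$ for all $g\in G_0$.

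\emph{Second}, fix $s\in N_G(G_0)$; then $sgs^{-1}\in G_0$ for every $g\in G_0$, and evaluating the cocycle identity on the two factorisations $sg=(sgs^{-1})s$ gives, for all $x\in X$,
\[
\phi(sgs^{-1})\,c(s,x)\;=\;c(sgs^{-1},sx)\,c(s,x)\;=\;c(sg,x)\;=\;c(s,gx)\,c(g,x)\;=\;c(s,gx)\,\phi(g),
\]
i.e. $c(s,gx)=\phi(sgs^{-1})\,c(s,x)\,\phi(g)^{-1}$. Writing $\beta:=c(s,\cdot)\colon X\to H$ and $T_g\colon H\to H$, $T_g(h):=\phi(sgs^{-1})\,h\,\phi(g)^{-1}$, this reads $\beta(gx)=T_g(\beta(x))$ for all $x\in X$, $g\in G_0$, and each $T_g$ is an isometry of $(H,d)$ since $d$ is bi-invariant.

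\emph{Third} --- and this is the step I expect to be the real obstacle --- I would show that $\beta$ is constant. Suppose not, so $m:=\max_{x,y\in X}d(\beta(x),\beta(y))>0$ (attained, by compactness). As $X$ is compact metrizable, $\beta$ is uniformly continuous, so $X$ is covered by finitely many nonempty open sets $U_1,\dots,U_n$ with $\operatorname{diam}\beta(U_i)<\varepsilon:=m/6$ for all $i$. Pick $p,q$ with $d(\beta(p),\beta(q))=m$; since $\varepsilon<m$ they lie in distinct members, say $p\in U_1$, $q\in U_2$. By topological mixing of $\alpha|_{G_0}$ applied to $(U_1,U_1)$ and to $(U_1,U_2)$, both $\{g\in G_0:gU_1\cap U_1\neq\varnothing\}$ and $\{g\in G_0:gU_1\cap U_2\neq\varnothing\}$ are cofinite in the infinite group $G_0$, hence intersect; choose $g$ in the intersection together with $u,w\in U_1$ such that $gu\in U_1$ and $gw\in U_2$. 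Using $\beta(gx)=T_g(\beta(x))$, that $T_g$ is an isometry, and the diameter bounds,
\[
d(\beta(w),\beta(gw))\;\le\;d(\beta(w),\beta(u))+d(\beta(u),T_g\beta(u))+d(T_g\beta(u),T_g\beta(w))\;<\;3\varepsilon;
\]
combined with $d(\beta(p),\beta(w))<\varepsilon$ and $d(\beta(gw),\beta(q))<\varepsilon$, this yields $m=d(\beta(p),\beta(q))<5\varepsilon=\tfrac56 m$, a contradiction. Hence $\beta=c(s,\cdot)$ is constant for every $s\in N_G(G_0)$. (When $H$ is discrete this is more transparent: $c(s,\cdot)$ is locally constant, its level sets form a finite clopen $G_0$-invariant partition of $X$ --- indeed $g\cdot\beta^{-1}(h)=\beta^{-1}(T_gh)$ --- and topological mixing rules out this partition having two distinct cells $P\ne P'$, since then $\{g:gP=P\}$ and $\{g:gP=P'\}$ would be disjoint and both cofinite.)

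\emph{Finally}, set $\rho(s):=c(s,x)$ for $s\in N_G(G_0)$, which is independent of $x$ by the previous step; the cocycle identity gives $\rho(st)=c(s,tx)\,c(t,x)=\rho(s)\rho(t)$, so $\rho\colon N_G(G_0)\to H$ is a homomorphism and $c|_{N_G(G_0)\times X}=\rho$. Undoing the normalisation of the first step, the original cocycle restricted to $N_G(G_0)\times X$ is cohomologous to the homomorphism $\rho$, as required. Everything outside the third step is formal manipulation of the cocycle identity; the content is that topological mixing produces a single $g\in G_0$ moving $U_1$ so as to meet both $U_1$ and $U_2$, which is incompatible with $\beta$ having positive oscillation once one knows the intertwiners $T_g$ are isometries.
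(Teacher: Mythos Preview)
Your proof is correct and follows essentially the same approach as the paper: normalise by $b$, derive the intertwining relation $c'(s,g_0x)=\phi(sg_0s^{-1})\,c'(s,x)\,\phi(g_0)^{-1}$ from the cocycle identity, and then use topological mixing of $\alpha|_{G_0}$ together with bi-invariance of $d$ to force $c'(s,\cdot)$ to be constant. The only differences are cosmetic---you package the relation as $\beta\circ g=T_g\circ\beta$ with $T_g$ an isometry and run a diameter/cover argument with $\varepsilon=m/6$, whereas the paper picks two witnessing open sets directly and uses a $4\varepsilon$ estimate---but the mechanism (find a single $g_0\in G_0$ moving $U_1$ to meet both $U_1$ and $U_2$, then contradict the oscillation via bi-invariance) is identical.
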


\begin{proof}
Let $c':G\times X\to H$ be a continuous $H$-cocycle defined by $c'(g,x):=b(gx)^{-1}c(g, x)b(x)$, for every $g\in G, x\in X$. Fix $g_0\in G_0$, let $L(g_0)$ be the common value of $b(g_0x)^{-1}c(g_0,x)b(x)$ for all $x\in X$.

First we claim for any fixed $g\in N_G(G_0)$, $c'(g, \cdot)$ is constant on $X$.

Fix $g\in N_G(G_0)$, and we denote by $g_0'=gg_0g^{-1}\in G_0$.
\begin{eqnarray*}
L^{-1}(g_0')c'(g, g_0x)L(g_0)&=&b(gx)^{-1}c^{-1}(g_0',gx)b(g_0'gx)b(gg_0x)^{-1}c(g, g_0x)b(g_0x)b(g_0x)^{-1}c(g_0, x)b(x)\\
&=&b(gx)^{-1}c^{-1}(g_0',gx)b(gg_0x)b(gg_0x)^{-1}c(g, g_0x)c(g_0, x)b(x)\\
&=&b(gx)^{-1}c^{-1}(g_0',gx)c(g, g_0x)c(g_0, x)b(x)\\
&=&b(gx)^{-1}c^{-1}(g_0',gx)c(gg_0, x)b(x)\\
&=&b(gx)^{-1}c^{-1}(g_0',gx)c(g_0'g, x)b(x)\\
&=&b(gx)^{-1}c(g, x)b(x)\\
&=&c'(g,x).
\end{eqnarray*}
And hence for every $g\in N_G(G_0),x\in X, g_0\in G_0$, one has
\begin{align}
\label{F-1}
c'(g,x)=L^{-1}(gg_0g^{-1})c'(g, g_0x)L(g_0).
\end{align}
Let $g\in N_G(G_0)$ and put $\varphi(\cdot)=c'(g,\cdot)$. Suppose that $\varphi$ is not constant, then there exist $\varepsilon>0, h_1,h_2\in H$, and non-empty open subsets $U_1, U_2$ of $X$ such that $\sup_{x\in U_i}d(\varphi(x),h_i)<\varepsilon$, for every $i=1,2$, and $d(h_1,h_2)>4\varepsilon$.  Since $\alpha|_{G_0}$ is topologically mixing, there exists a finite subset $F$ of $G_0$ such that $g_0U_1\cap U_i\neq \varnothing$ for every $g_0\in G_0\setminus F$ and $i=1,2$. Therefore, there exist $y_i\in g_0U_1\cap U_i$, so $d(\varphi(g_0^{-1}y_i),h_1)<\varepsilon$ and $d(\varphi(y_i), h_i)<\varepsilon$, for every $i=1, 2$. And hence from (\ref{F-1}), one has $d(L^{-1}(gg_0g^{-1})\varphi(y_i)L(g_0),h_1)<\varepsilon$ for every $i=1, 2$. Thus, since $d$ is bi-invariant we have
\begin{eqnarray*}
d(h_1,h_2)&\leq& d(h_2,\varphi(y_2))+d(\varphi(y_2),\varphi(y_1))+d(\varphi(y_1),h_1)\\
&<&2\varepsilon+d(L^{-1}(gg_0g^{-1})\varphi(y_2)L(g_0),L^{-1}(gg_0g^{-1})\varphi(y_1)L(g_0))\\
&\leq &2\varepsilon+d(L^{-1}(gg_0g^{-1})\varphi(y_2)L(g_0),h_1)+d(h_1,L^{-1}(gg_0g^{-1})\varphi(y_1)L(g_0))\\
&<&4\varepsilon \mbox { (contradiction).}
\end{eqnarray*}
Thus $\varphi$ is constant, and we may write $L(g)=c'(g,\cdot)$ when $g\in N_G(G_0)$.

Now it suffices to prove that $L$ is a homomorphism on $N_G(G_0)$.

For any $g_1, g_2$ in $N_G(G_0)$, one has
\begin{eqnarray*}
L(g_1g_2)&=&b(g_1g_2x)^{-1}c(g_1g_2, x)b(x)\\
&=&b(g_1g_2x)^{-1}c(g_1, g_2x)b(g_2x)b(g_2x)^{-1}c(g_2, x)b(x)\\
&=&L(g_1)L(g_2)\\
L(g_1^{-1})&=& b(g_1^{-1}x)^{-1}c(g_1^{-1}, x)b(x)\\
&=&[b(g_1g_1^{-1}x)^{-1}c(g_1, g_1^{-1}x)b(g_1^{-1}x)]^{-1}\\
&=&L(g_1)^{-1}
\end{eqnarray*}
\end{proof}
\begin{remark}
\begin{enumerate}
\item If $H$ is discrete, then the discrete metric $d$ defined on $H$ by $d(x,y)=1$ if $x\neq y$ and 0 otherwise is a bi-invariant metric.
\item If $H$ is a compact metrizable group then there exists a bi-invariant metric on $H$ \cite[Lemma C.2]{EW}.
\end{enumerate}
\end{remark}

\subsection{Ends of groups}
We recall that a map $\varphi:X\to Y$ between topological spaces is called to be proper if $\varphi^{-1}(K)$ is compact for every compact subset $K$ of $Y$. A ray in a topological space $X$ is a map $\varphi:[0,\infty)\to X$. Two continuous proper rays $\varphi_1, \varphi_2:[0,\infty)\to X$ are said to have the same end if for every compact subset $K$ of $X$, there exists $N\in\N$ such that $\varphi_1([N,\infty))$ and $\varphi_2([N,\infty))$ are contained in the same path components of $X\setminus K$. This defines an equivalence relation on the set of continuous proper rays of $X$ and we denote by $End(X)$ the set of all such equivalence classes. If $|End(X)|=k$, we say that $X$ has $k$ ends.
\begin{definition}
Let $G$ be a finitely generated group with $S$ is a finitely generating set. Let $X$ be its Cayley graph with respect to $S$. We define the set of ends of the group $G$, $End(G):=End(X)$.
\end{definition}
It is known that $End(G)$ does not depend on the choice of the finite generating set $S$ and every group $G$ has either $0,1,2$, or infinitely many ends.  Morever, $G$ has 0 ends if and only if $G$ is finite, 2 ends if and only if it has a subgroup of finite index isomorphic to $\Z$, and according to Stallings' Ends Theorem  \cite{Stallings}, $G$ has infinitely many ends if and only if $G = A*_CB$
or $G = A*_\varphi$ with $|A/C|\geq 3$, $|B/C|\geq 2$ and $\varphi$ is an
isomorphism between finite subgroups having index $\geq 2$ in A. These properties mentioned above may be found in \cite[Section 13.5]{Geoghegan}, \cite[Section IV.25.vi]{Harpe} or \cite[Theorem I.8.32]{BH}, and reference therein. 

Now we present several examples of groups with one end, the class of groups that we use in this paper.
\begin{example}\label{E-groups with one end}
\begin{enumerate}
\item Every amenable group that is not virtually cyclic has one end (applying Stalling's theorem \cite{Stallings}, or see \cite{MV} for a short proof).\\
Before discussing of the next example, let us review the definition of $1$-cocycle. Let $G$ be a countably infinite group and $M$ a left $\Z G$-module. An $1$-cocyle $c:G\to M$ is a map satisfying $$c(gh)=c(g)+gc(h), $$ for every $g,h\in G$. An $1$-cocycle $c:G\to M$ is a coboundary if there exists an $m\in M$ such that $c(g)=m-gm$ for every $g\in G$. We denote by $Z^1(G,M)$ and $B^1(G,M)$ the space of $1$-cocycles and coboundaries, respectively. The quotient space $Z^1(G,M)/B^1(G,M)$ is called the first cohomology of the group $G$ with coefficients in $M$ and is denoted by $H^1(G,M)$.
\item If a nonamenable group $G$ has zero first $\ell^2$-Betti number, then it has one end.
\begin{proof}
  From \cite[Corollary 2.4]{PetersonThom}, we know that $H^1(G, \ell^2(G))=0$. Then applying the Remark after Proposition 1 in \cite{BV}, one has $H^1(G, \mathbb{Z}G)=0$, which is equivalent to that $G$ has one end \cite[Theorem 4.6]{Houghton} (see also \cite[Theorem 13.5.5]{Geoghegan}). 
\end{proof}
\end{enumerate}

\end{example}

\subsection{Homoclinic equivalence relation in shifts}

Let $A$ be a finite set. The space $A^G:=\{x:G\to A\}$ endowed with the product topology is a compact metrizable space. The group $G$ acts on $A^G$ by the left shifts defined by $(gx)_h:=x_{g^{-1}h}$, for every $g,h\in G,x\in A^G$. A \textit{subshift} is a closed $G$-invariant subset of $A^G$. Let $X\subset A^G$ be a subshift. For a nonempty finite subset $F$ of $G$ and $x\in A^G$,
 we denote by $x_F$ the restriction of $x$ to $F$, and define $X_F:=\{x_F:x\in X\}$. If $F=\{g\}$ for some $g\in G$, we write it simply as $x_g$. Let $R\subset X\times X$ be an equivalence relation on $X$. For every $x\in X$, we denote by $R(x)$ the equivalence class of $x$ consisting of all $y\in X$ such that $(x,y)\in R$. A map $c:R\to H$ is a cocycle if $$c(x_1,x_2)c(x_2,x_3)=c(x_1,x_3) \mbox{  ,} \forall (x_1,x_2),(x_2,x_3)\in R.$$ We consider the \textit{homoclinic equivalence relation} $\Delta_X\subset X\times X$ defined by $$\Delta_X:=\{(x,x')\in X\times X: \mbox{ there exists a finite subset } F \mbox{ of } G \mbox{ such that } x_g=x'_{g},\forall g\in G\setminus F\}.$$
For every $x\in X$, we denote by $\Delta_X(x)$ the homoclinic equivalence class of $x$ in $X$.
From now on, $G$ is always a finitely generated group. Let $S$ be a finite symmetric generating set of $G$. Let $\ell_S$ and $d$ be the word length and word metric on $G$ induced by $S$, respectively. For every $r\geq 0$, we put $B(r):=\{g\in G:\ell_S(g)\leq r\}$. Assume that there is an element $g\in G$ such that $\ell_S(g^k)\to \infty$ as $k\to\infty$, equivalently, $g$ has infinite order in $G$. Let $H$ be a discrete group and $f:X\to H$ be a continuous map. Then one can define cocycles $c_f^{(g),\pm}:\Delta_X\to H$ as follows: for every $(x,x')\in\Delta_X$, and $m\geq 1$, we define
\begin{align*}
c_f^{(g),+,(m)}(x,x') & =\Big(\prod_{k=0}^{m-1}f(g^kx)^{-1}
\Big)\Big(\prod_{k=0}^{m-1}f(g^kx')^{-1}\Big)^{-1},\\
c_f^{(g),-,(m)}(x,x') & =\Big(\prod_{k=1}^{m-1}f(g^{-k}x)
\Big)\Big(\prod_{k=1}^{m-1}f(g^{-k}x')\Big)^{-1}.
\end{align*}
For every $(x,x')\in\Delta_X$, there exists a finite subset $F$ of $G$ such that $x_h=x'_h$ for every $h\in G\setminus F$. Because $H$ is discrete and $f$ is continuous, there exists a non-empty finite subset $F_1$ of $G$ such that $f(y)=f(z)$ for every $y,z\in X$ with $y_{F_1}=z_{F_1}$. Because $\ell_S(g^k)\to\infty$ as $k\to\infty$, there exists $N\in \N$ such that $g^{-k}\not\in FF_1^{-1}$, for every $k\geq N$, i.e. $g^{-k}F_1\subset G\setminus F$, for every $k\geq N$. Thus for every $k\geq N$, one has $(g^kx)_{F_1}=x_{g^{-k}F_1}=x'_{g^{-k}F_1}=(g^kx')_{F_1}$, and hence
\begin{eqnarray*}
c_f^{(g),+,(m)}(x,x')  &=&\Big(\prod_{k=0}^{m-1}f(g^kx)^{-1}
\Big)\Big(\prod_{k=0}^{m-1}f(g^kx')^{-1}\Big)^{-1}\\
&=&\Big(\prod_{k=0}^{N-1}f(g^kx)^{-1}
\Big)\Big(\prod_{k=0}^{N-1}f(g^kx')^{-1}\Big)^{-1}=c_f^{(g),+,(N)}(x,x'),
\end{eqnarray*}
for every $m\geq N$. Then we can define a map $c_f^{(g),+}:\Delta_X\to H$ by $$c_f^{(g),+}(x,x'):=\lim_{m\to \infty}c_f^{(g),+,(m)}(x,x') \mbox{   (*)  }, $$ for every $(x,x')\in \Delta_X$ and $c_f^{(g),+}$ will satisfy the cocycle condition $$c_f^{(g),+}(x_1,x_2)c_f^{(g),+}(x_2,x_3)=c_f^{(g),+}(x_1,x_3) \mbox{  ,}\forall (x_1,x_2),(x_2,x_3)\in \Delta_X.$$ Similarly, we also have a cocycle map $c_f^{(g),-}:\Delta_X\to H$ defined by $$c_f^{(g),-}(x,x'):=\lim_{m\to \infty}c_f^{(g),-,(m)}(x,x') \mbox{  (**)  } ,$$ for every $(x,x')\in \Delta_X$.

\begin{lem}\label{test to show same transfer map for different directions}
Let $G$ be a group with a finite symmetric generating set $S$. Assume $G$ has one end and $g, h$ are elements in $G$ with infinite order, then $c_{f_g}^{(g), +}(x, y)=c_{f_h}^{(h), +}(x, y)$ for all $(x, y)\in\Delta_X$, where $f_g(x):=c(g, x), ~f_h(x):=c(h, x)$ for all $x\in X$ and $c: G\times X\to H$ is a continuous cocycle with $(G, X)$ being a subshift and $H$ being a discrete group.
\end{lem}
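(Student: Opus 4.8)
The plan is to reduce the equality $c_{f_g}^{(g),+}=c_{f_h}^{(h),+}$ on $\Delta_X$ to a statement about the first cohomology of $G$, using the hypothesis that $G$ has one end. First I would observe that both $c_{f_g}^{(g),+}$ and $c_{f_h}^{(h),+}$ are cocycles on the homoclinic equivalence relation $\Delta_X$ valued in the discrete group $H$; since $\Delta_X$ is not necessarily a group, but each class $\Delta_X(x)$ is countable and the cocycle is determined by its values, the key point is to compare the two cocycles locally. I would fix $(x,y)\in\Delta_X$, so that $x$ and $y$ agree off a finite set $F\subset G$. The strategy is to interpolate: one can pass from $x$ to $y$ through a finite chain $x=x_0,x_1,\dots,x_n=y$ of homoclinic points, where consecutive terms differ in exactly one coordinate. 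By the cocycle identity, it suffices to prove the equality when $x$ and $y$ differ in a single coordinate, say at $s\in G$.

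Next, for $x,y$ differing only at $s$, I would analyze the difference cocycle $\delta(x,y):=c_{f_g}^{(g),+}(x,y)\,\bigl(c_{f_h}^{(h),+}(x,y)\bigr)^{-1}$ and show it is trivial. Here is where I expect to use one-endedness: consider a path in the Cayley graph of $G$ from (a neighborhood of) the $g$-orbit direction to the $h$-orbit direction that avoids the finite set controlling where $x$ and $y$ differ — this is exactly possible because $G$ has one end, so $G\setminus B(r)$ is connected for all $r$, hence the positive $g$-ray and the positive $h$-ray (both proper rays, since $g,h$ have infinite order) eventually lie in the same connected component of the complement of any finite set. One then builds an auxiliary element/word $w$ conjugating one situation to the other and uses the cocycle equation for $c$ together with the continuity of $b$ (equivalently, the local constancy of $f_g,f_h$ on large cylinder sets) to telescope $\delta(x,y)$ to the identity. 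Concretely, I would write out $\prod f(g^k x)^{-1}$ and $\prod f(h^k x)^{-1}$ and use that far enough out both products stabilize (as already shown in the construction of $c_f^{(g),\pm}$ preceding this lemma), reducing everything to finite products that can be matched up along a connecting path in $G\setminus F$.

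The main obstacle, I expect, is the bookkeeping in the interpolation/telescoping step: ensuring that the connecting path supplied by one-endedness can be chosen to avoid not just the finite ``difference set'' $F$ but also the finite set $F_1$ governing the continuity modulus of $f_g$ and $f_h$ simultaneously, and that the telescoping respects the noncommutativity of $H$. I would handle this by first enlarging $F$ to a single finite set $F'$ that works for all the relevant maps and all finitely many coordinates in the chain $x_0,\dots,x_n$, then invoking the fact that $G$ is one-ended to find $N$ with $g^k, h^k \notin F'(F')^{-1}$ and with the rays $\{g^k\}_{k\ge N}$, $\{h^k\}_{k\ge N}$ in the same component of $G\setminus F'$; walking along a path in that component from $g^N$ to $h^N$ and applying the single-coordinate-difference case at each step of the path gives the equality. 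A clean alternative, which I would present if the direct argument gets unwieldy, is to note that $(x,y)\mapsto \delta(x,y)$ defines a continuous cocycle on $\Delta_X$ that is trivial on the ``local'' pieces and to invoke one-endedness in the form of Specker's/Houghton's characterization $H^1(G,\Z G)=0$ to conclude it vanishes globally — this is the same mechanism used elsewhere in the paper and dovetails with Lemma~\ref{L-plus and minus cocycles equal imply the extension of cocycle} downstream.
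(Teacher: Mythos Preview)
Your core intuition---use one-endedness to connect the positive $g$-ray and the positive $h$-ray by a path in the complement of a large ball, then telescope via the cocycle identity for $c$---is exactly the mechanism in the paper's proof. But the reduction to single-coordinate differences is a genuine gap, not merely extra bookkeeping: the lemma is stated for an arbitrary subshift $X\subset A^G$, and for a proper subshift the intermediate points $x_0,\dots,x_n$ obtained by flipping one coordinate at a time need not lie in $X$. Since $c_{f_g}^{(g),+}$ is only defined on $\Delta_X\subset X\times X$ (and $f_g=c(g,\cdot)$ only on $X$), you cannot invoke the cocycle identity $c_{f_g}^{(g),+}(x,y)=\prod_i c_{f_g}^{(g),+}(x_{i-1},x_i)$ along such a chain.

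The paper sidesteps this entirely by working directly with a general pair $(x,y)\in\Delta_X$. One first rewrites $c_{f_g}^{(g),+}(x,y)=c(g^N,x)^{-1}c(g^N,y)$ for $N$ large, and similarly for $h$. Choosing $\ell$ so that the continuity window of $c(s,\cdot)$ (for $s\in S$) together with the finite difference set of $(x,y)$ sits inside $B(\ell)$, one-endedness gives a path $g^N=s_k\cdots s_1 h^N$ with every partial product $s_i\cdots s_1 h^N$ outside $B(\ell)$. Expanding
\[
c(g^N,x)=c(s_k,\,s_{k-1}\cdots s_1 h^N x)\cdots c(s_1,\,h^N x)\,c(h^N,x)
\]
via the cocycle identity, each factor $c(s_i,\,s_{i-1}\cdots s_1 h^N\,\cdot)$ depends only on coordinates in $(s_{i-1}\cdots s_1 h^N)^{-1}F$, which is disjoint from the set where $x$ and $y$ differ; hence these factors agree on $x$ and $y$, and one reads off $c(g^N,x)^{-1}c(g^N,y)=c(h^N,x)^{-1}c(h^N,y)$ directly. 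No interpolation through intermediate configurations is needed, and noncommutativity of $H$ causes no trouble because the matching factors cancel in order.

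Finally, your proposed alternative via Specker--Houghton and $H^1(G,\Z G)=0$ is not the right tool here: that cohomological input enters the paper only in the converse direction (constructing a nontrivial cocycle when $G$ has more than one end), whereas the present lemma is a direct Cayley-graph computation.
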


\begin{proof}

From the definition of $f_g$ and $c_{f_g}^{(g), +}$, it is easy to see that 
\begin{eqnarray*}
c_{f_g}^{(g), +}(x, y)&=&\lim_{n\to\infty}c(g^n, x)^{-1}c(g^n, y),\\
c_{f_h}^{(h), +}(x, y)&=&\lim_{n\to\infty}c(h^n, x)^{-1}c(h^n, y).
\end{eqnarray*}

Since the target group is discrete, actually both limits are obtained for sufficiently large $n$, say for all $n>N_1$.

Now, since $(x, y)\in\Delta_X$, we may assume $x_g=y_g$ for all $g\not\in B(r)$ for some $r>0$.

Since $S$ is a finite set, the target group is discrete and $c$ is a continuous cocycle, we may find a finite set $F\subset G$ such that $c(s, x)=c(s, y)$ for all $s\in S$ and all $x, y\in X\subset A^G$ whenever $x_F=y_F$.

Now, pick $\ell>0$ large enough such that $FB(r)\subset B(\ell)$.

Then, since $\lim_{n\to\pm\infty}\ell_S(g^n)=\infty, \lim_{n\to\pm\infty}\ell_S(h^n)=\infty$, we may find $N_0$ such that $g^n, h^n\not\in B(\ell+\ell_S(g)+\ell_S(h)+2)$ for all $|n|>N_0$.

Fix $N> N_0, N_1$.

Observe that there is an $\infty$-path $P_g$ passing through $\{g^k: k\leq -N\}$. Indeed, write $g^{-1}=t_1t_2\cdots t_{\ell_S(g)}$, where $t_i\in S$ for all $1\leq i\leq \ell_S(g)$, then we can take $P_g$ to be the one which starts with $g^{-N}$, then passes through $$g^{-N}t_1, g^{-N}t_1t_2, \cdots, g^{-N}t_1t_2\cdots t_{\ell_S(g)-1}, g^{-N-1}, g^{-N-1}t_1, \cdots.$$  

Note that $P_g$ lies outside of $B(\ell)$ by the choice of $N_0$.

Similarly, we also have an $\infty$-path $P_h$ passing through $\{h^k: k\leq -N\}$ which lies outside of $B(\ell)$.

Now, since $G$ has one end, both $g^{-N}$ and $h^{-N}$ lie in the unique $\infty$-connected component of $G\setminus B(\ell)$. This implies there exists a path in $G\setminus B(\ell)$ connecting $g^{-N}$ and $h^{-N}$, i.e. there exists a finite sequence $s_i, i=1, \cdots, k$ with all $s_i\in S$ such that $h^{-N}{s_1}^{-1}{s_2}^{-1}\cdots {s_i}^{-1}\not\in B(\ell)$, for all $i=1,\cdots, k$ and $g^{-N}=h^{-N}s_1^{-1}\cdots s_k^{-1}$; since $S$ is symmetric, this is equivalent to say that $s_i\cdots s_2s_1h^N\not\in B(\ell)$, for all $i=1, \cdots, k$ and $g^N=s_k\cdots s_1h^N$. 

Then, the cocycle identity implies the following:
\begin{eqnarray*}
c(g^N, x)&=&c(s_k\cdots s_1h^N, x)\\
&=&c(s_k, s_{k-1}\cdots s_1h^Nx)c(s_{k-1}, s_{k-2}\cdots s_1h^Nx)\cdots c(s_1, h^Nx)c(h^N, x).\\
\end{eqnarray*}
Now, we claim that $c(s_i, s_{i-1}\cdots s_1h^Nx)=c(s_i, s_{i-1}\cdots s_1h^Ny)$ for all $i=1,\cdots, k$. 

To check this, we only need to show $(s_{i-1}\cdots s_1h^Nx)_F=(s_{i-1}\cdots s_1h^Ny)_F$, or equivalently, $x_{(s_{i-1}\cdots s_1h^N)^{-1}F}=y_{(s_{i-1}\cdots s_1h^N)^{-1}F}$ for all $i=1,\cdots, k$. It suffices to show that $(s_{i-1}\cdots s_1h^N)^{-1}F\cap B(r)=\emptyset$, i.e. $(s_{i-1}\cdots s_1h^N)\not\in FB(r)^{-1}=FB(r)$. This is true since $s_{i-1}\cdots s_1h^N\not\in B(\ell)$ and $FB(r)\subset B(\ell)$.

Hence, we deduce that 
\begin{eqnarray*}
&&c(s_k, s_{k-1}\cdots s_1h^Nx)c(s_{k-1}, s_{k-2}\cdots s_1h^Nx)\cdots c(s_1, h^Nx)\\
&=&c(s_k, s_{k-1}\cdots s_1h^Ny)c(s_{k-1}, s_{k-2}\cdots s_1h^Ny)\cdots c(s_1, h^Ny).
\end{eqnarray*}

Therefore, we have 

\begin{eqnarray*}
c_{f_g}^{(g), +}(x, y)&=&c(g^N, x)^{-1}c(g^N, y)\\
&=&c(h^{N}, x)^{-1}c(h^{N}, y)\\
&=&c_{f_h}^{(h), +}(x, y).
\end{eqnarray*}

\end{proof}

\begin{cor}\label{L-coincidence of plus and minus Livsic's test cocycles}
Let $G$ be a group with a finite symmetric generating set $S$. Assume $G$ has one end and $g$ is an element in $G$ with infinite order, then $c_{f_g}^{(g), +}(x, y)=c_{f_g}^{(g), -}(x, y)$ for all $(x, y)\in\Delta_X$, where $f_g(x):=c(g, x)$ for all $x\in X$ and $c: G\times X\to H$ is a continuous cocycle with $(G, X)$ being a subshift and $H$ being a discrete group.
\end{cor}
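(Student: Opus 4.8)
The plan is to deduce Corollary \ref{L-coincidence of plus and minus Livsic's test cocycles} from Lemma \ref{test to show same transfer map for different directions} by observing that the ``minus'' test cocycle in the direction $g$ is essentially a ``plus'' test cocycle in the direction $g^{-1}$. So the first step is to work out the precise relation between $c_{f_{g}}^{(g),-}$ and $c_{f_{g^{-1}}}^{(g^{-1}),+}$. Recall that $c_{f_g}^{(g),+}(x,y)=\lim_{n\to\infty}c(g^{n},x)^{-1}c(g^{n},y)$ and, directly from the defining formula, $c_{f_g}^{(g),-}(x,y)=\lim_{n\to\infty}\bigl(\prod_{k=1}^{n-1}f_g(g^{-k}x)\bigr)\bigl(\prod_{k=1}^{n-1}f_g(g^{-k}x')\bigr)^{-1}$ where $f_g(z)=c(g,z)$. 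The key computation is to express $\prod_{k=1}^{n-1}c(g,g^{-k}x)$ as a single cocycle value evaluated along powers of $g^{-1}$. Using the cocycle identity repeatedly, $c(g^{-(n-1)},x)=c(g^{-1},g^{-(n-2)}x)c(g^{-1},g^{-(n-3)}x)\cdots c(g^{-1},x)$, and since $c(g^{-1},g^{-k}x)=c(g,g^{-(k+1)}x)^{-1}$ (from $c(e,\cdot)=e$ and the cocycle relation applied to $g\cdot g^{-1}$), one gets that $\prod_{k=1}^{n-1}c(g,g^{-k}x)$ equals $c(g^{-(n-1)},x)^{-1}$ up to reindexing. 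Hence $c_{f_g}^{(g),-}(x,y)=\lim_{n\to\infty}c(g^{-n},x)^{-1}c(g^{-n},y)=c_{f_{g^{-1}}}^{(g^{-1}),+}(x,y)$, after checking the bookkeeping on the index shift (which only affects finitely many terms and so does not change the eventual limit).

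With that identity in hand, the second step is immediate: $g$ has infinite order if and only if $g^{-1}$ does, and $\ell_S(g^{-k})=\ell_S((g^k)^{-1})=\ell_S(g^k)\to\infty$ since $S$ is symmetric, so $g^{-1}$ is a legitimate ``direction'' to which Lemma \ref{test to show same transfer map for different directions} applies. Apply that lemma with the pair $(g,g^{-1})$ (both of infinite order, $G$ one-ended): it gives $c_{f_g}^{(g),+}(x,y)=c_{f_{g^{-1}}}^{(g^{-1}),+}(x,y)$ for all $(x,y)\in\Delta_X$. Combining with the identity from the first step, $c_{f_{g^{-1}}}^{(g^{-1}),+}(x,y)=c_{f_g}^{(g),-}(x,y)$, yields $c_{f_g}^{(g),+}(x,y)=c_{f_g}^{(g),-}(x,y)$, which is exactly the claim.

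The only real work, and the step I expect to need the most care, is the algebraic identification $c_{f_g}^{(g),-}=c_{f_{g^{-1}}}^{(g^{-1}),+}$: one must be careful about the order of the products (the group $H$ is noncommutative), about the off-by-one in the index ranges ($\prod_{k=0}^{m-1}$ versus $\prod_{k=1}^{m-1}$ in the two definitions), and about correctly expanding $c(g^{-n},x)$ as a telescoping product in the right order. None of this is deep, but it is the place where a sign/order error would be easy to make. Everything else is just invoking Lemma \ref{test to show same transfer map for different directions} as a black box and noting that discreteness of $H$ makes all the limits stabilize, so finitely many reindexing terms are harmless.
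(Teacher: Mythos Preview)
Your proposal is correct and follows exactly the paper's approach: the paper's proof simply states that $c_{f_g}^{(g),-}(x,y)=\lim_{n\to\infty}c(g^{-n},x)^{-1}c(g^{-n},y)=c_{f_{g^{-1}}}^{(g^{-1}),+}(x,y)$ and then applies Lemma~\ref{test to show same transfer map for different directions} with $h=g^{-1}$. Your more detailed unpacking of the telescoping product (reducing $\prod_{k=1}^{m-1}c(g,g^{-k}x)$ to $c(g^{-(m-1)},x)^{-1}$ via $c(g,g^{-k}x)=c(g^{-1},g^{-(k-1)}x)^{-1}$) is precisely the computation the paper suppresses, and your caution about product order and the index shift is well placed but does not hide any genuine difficulty.
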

\begin{proof}
From definition of $c_{f_g}^{(g), -}$, we deduce that 

$c_{f_g}^{(g), -}(x, y)=\lim_{n\to\infty}c(g^{-n}, x)^{-1}c(g^{-n}, y)=c_{f_{g^{-1}}}^{(g^{-1}), +}(x, y)$. 

So we may apply the above lemma by setting $h=g^{-1}$. 
\end{proof}

\begin{remark}
(1) The special case $G=\Z^d, ~d>1$ of Corollary \ref{L-coincidence of plus and minus Livsic's test cocycles} is proved implicitly in the proof of \cite[Theorem 3.2]{Schmidt95}, but by a direct calculation using the structure of $\Z^d$. 

(2) We could not drop the assumption that $G$ has one end in Corollary \ref{L-coincidence of plus and minus Livsic's test cocycles}. Indeed, one can check that the cocycle constructed in Example \ref{Z is not in G} fails this lemma, and in fact $\Z\not\in\mathcal{G}$, see \cite[Introduction]{katokschmidt} for more discussion on this. 

(3) We can slightly relax the assumption in Lemma \ref{test to show same transfer map for different directions}. Indeed, the proof still works if we only assume there exists a finitely generated, one-ended subgroup $G_1<G$ such that the non-torsion elements $g, h\in G_1$. So the assumption that $G$ has one end is not used so crucially here as in Lemma \ref{trivial cocycles in two directions generates a subgroup}.
\end{remark}

\section{Specification properties of shifts}
Specification is a strong orbit tracing property. It was introduced first by Rufus Bowen in the early of 1970s to investigate the uniqueness of equilibrium state of certain maps. Ruelle \cite{Ruelle} extended the notion to $\Z^d$ -actions. Several definitions of various specification
properties were introduced in \cite[Definition 5.1]{LS} (see also \cite[Remark 5.6]{LS}) to study algebraic actions of $\Z^d$. These have been modified to
the general group case in \cite[Section 6]{CL}. Recently, a numerous weaker forms of specification property for $\Z$-shifts have been introduced to study intrinsic ergodicity and irregular sets \cite{CT,CT1,Pavlov, Thompson}. To deal with cocycle superrigidity for $\Z^d$-shifts, Schmidt suggested a modified specification property which depends heavily on the Euclidean structure of $\Z^d$ \cite{Schmidt95}. In this section, we give two more new different versions of specification properties or shifts over general groups which play important roles in our work. 

\begin{definition}\label{definition of specification}
Let $G$ be a finitely generated group with a finite generating set $S$. Let $a$ be an element in $G$. For every $r\geq 0$, we define
$$\Delta^+(a,r):=\{(x,x')\in \Delta_X: x_{\{a^{k}: k\geq 0\}B(r)}=x'_{\{a^{k}: k\geq 0\}B(r)}\},$$
$$\Delta^-(a,r):=\{(x,x')\in \Delta_X: x_{\{a^{-k}:k\geq 0\}B(r)}=x'_{\{a^{-k}:k\geq 0\}B(r)}\}.$$
We say that the equivalence class $\Delta_X(\bar{x})$ of a point $\bar{x}\in X$ has strong $a$-specification (respectively, $a$-specification) if $\Delta_X(\bar{x})\cap a^{-1}\Delta_X(\bar{x})$(respectively, $\Delta_X(\bar{x})$) is dense in $X$, and if the following property holds: for any $r\geq 0$ there exists some finite integer $N=N(a, r)\geq r$, such that if $x,x'\in \Delta_X(\bar{x})$ satisfy that $x_{B(N(a, r))}=x'_{B(N(a, r))}$ then we can find an element $y$ in $\Delta_X(\bar{x})$ such that $(x,y)\in \Delta^+(a,r)$ and $(x',y)\in \Delta^-(a,r)$. And we say that the homoclinic equivalence relation $\Delta_X$ has strong $a$-specification (respectively, $a$-specification) if there exists a point $\bar{x}\in X$ such that $\Delta_X(\bar{x})$ has strong $a$-specification (respectively, $a$-specification).
\end{definition}
\begin{remark}
Although we do not put assumption on the element $a$, in application, we are mainly interested in $a$ being non-torsion. 
\end{remark}
The following observation will make sense for the existence of the point $y$ in our definition of specification.
\begin{lem}\label{L-specification of group}
Let $a$ be an element of $G$ with infinite order, then for every $r\geq 0$, there exists some finite integer $N=N(a, r)\geq r$, such that ${\{a^{k}: k\geq 0\}B(r)}\cap {\{a^{k}: k\leq 0\}B(r)}\subset B(N(a, r))$.
\end{lem}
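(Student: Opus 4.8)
The plan is to exploit the fact that $a$ has infinite order so that the sets $\{a^k : k \geq 0\}$ and $\{a^k : k \leq 0\}$ meet only in $\{a^0\} = \{e\}$. Then the intersection $\{a^k:k\geq 0\}B(r) \cap \{a^k : k\leq 0\}B(r)$ consists of those group elements $g$ that can be written both as $a^i s$ with $i \geq 0, s \in B(r)$ and as $a^{-j} t$ with $j \geq 0, t \in B(r)$; I want to bound $\ell_S(g)$ uniformly in terms of $r$ alone. The key quantitative input is that $\ell_S(a^k) \to \infty$ as $k \to \infty$ (equivalently $k \to -\infty$), which holds precisely because $a$ is non-torsion in a finitely generated group with the word metric.

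First I would observe that if $g = a^i s = a^{-j} t$ with $i, j \geq 0$ and $s, t \in B(r)$, then $a^{i+j} = t s^{-1} \in B(2r)$, so $\ell_S(a^{i+j}) \leq 2r$. Since $\ell_S(a^k)\to\infty$, the set $\{k \geq 0 : \ell_S(a^k) \leq 2r\}$ is finite; let $m = m(a,r)$ be its maximum (it is nonempty since $k=0$ qualifies). Hence $i + j \leq m$, so in particular $i \leq m$, and therefore $\ell_S(g) = \ell_S(a^i s) \leq \ell_S(a^i) + \ell_S(s) \leq \max_{0\leq k\leq m}\ell_S(a^k) + r =: N'$. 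Finally I would set $N(a,r) := \max\{r, N'\}$, which is a finite integer with $N(a,r) \geq r$ and with $\{a^k:k\geq 0\}B(r)\cap\{a^k:k\leq 0\}B(r)\subset B(N(a,r))$, as desired.

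There is no serious obstacle here; the statement is essentially a packaging of the elementary fact that a non-torsion element has word length tending to infinity along its cyclic subgroup. The only point requiring a modicum of care is to make sure the bound depends on $r$ but not on the particular element $g$ in the intersection — which is handled by extracting the uniform bound $i+j \leq m(a,r)$ before estimating $\ell_S(g)$, rather than trying to bound $\ell_S(a^i)$ directly in terms of $\ell_S(g)$. One should also note that $\{k\geq 0:\ell_S(a^k)\leq 2r\}$ is indeed finite: otherwise there would be arbitrarily large $k$ with $\ell_S(a^k)\leq 2r$, contradicting $\ell_S(a^k)\to\infty$; and it is nonempty because $\ell_S(a^0)=0\leq 2r$.
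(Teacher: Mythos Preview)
Your proof is correct and follows essentially the same approach as the paper: write an element of the intersection as $a^i s = a^{-j} t$, deduce $a^{i+j}\in B(2r)$, use $\ell_S(a^k)\to\infty$ to bound $i$, and then estimate $\ell_S(a^i s)$. The only cosmetic difference is that the paper takes $N=\sum_{i=0}^M\ell_S(a^i)+r$ whereas you use the (slightly sharper) $\max_{0\leq k\leq m}\ell_S(a^k)+r$.
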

\begin{proof}
If $z=a^kx=a^{-\ell}y$, where $x, y\in B(r)$ and $k, \ell\geq 0$, then $a^{k+\ell}=yx^{-1}\in B(2r)$. Since $a$ is not torsion, $\lim_{n\to\infty}\ell_S(a^n)=\infty$. Therefore, $S_r:=\{n\geq 0:~a^n\in B(2r)\}$ is a finite set, and we denote the largest element in $S_r$ by $M=M(a, r)$, then we deduce $k\leq k+\ell\leq M$ since $k+\ell\in S_r$.

Now, $\ell_S(z)=\ell_S(a^kx)\leq \ell_S(a^k)+r\leq \sum_{i=0}^M\ell_S(a^i)+r=:N$. 

So, ${\{a^{k}: k\geq 0\}B(r)}\cap {\{a^{k}: k\leq 0\}B(r)}\subset B(N(a, r))$.

\end{proof}
\begin{lem}\label{full shift has specification}
Let $(G, A^G)$ be the full shift. If $a\in G$ is not torsion, then $\Delta_X(\bar{x})$ has strong $a$-specification, where $\bar{x}=(0)_G$, i.e. $\bar{x}$ is the element in $X=A^G$ with every coordinate to be a constant $0\in A$.
\end{lem}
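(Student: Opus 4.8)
The plan is to exhibit the required point $y$ explicitly by gluing the ``positive tail'' of $x$ to the ``negative tail'' of $x'$. Since we are in the full shift $A^G$, there are no constraints to respect: any element of $A^G$ lies in $X$, so the only real work is (i) to check that $\Delta_X(\bar x)$ is the set of finitely-supported configurations (so its density, and the density of $\Delta_X(\bar x)\cap a^{-1}\Delta_X(\bar x)$, are routine), and (ii) to produce, given $r\ge 0$ and configurations $x,x'\in\Delta_X(\bar x)$ agreeing on a large ball, a configuration $y\in\Delta_X(\bar x)$ with $(x,y)\in\Delta^+(a,r)$ and $(x',y)\in\Delta^-(a,r)$.

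First I would record that $\Delta_{A^G}(\bar x)=\{x\in A^G: x_g=0 \text{ for all but finitely many } g\}$: indeed $(x,\bar x)$ is homoclinic iff $x$ differs from the all-$0$ configuration on a finite set. This set is clearly dense in $A^G$ (finite modifications of $\bar x$ are dense), and $a^{-1}$ of it is the same set, so $\Delta_X(\bar x)\cap a^{-1}\Delta_X(\bar x)=\Delta_X(\bar x)$ is dense; this disposes of the density clause. For the specification clause, set $N=N(a,r)$ to be the constant produced by Lemma \ref{L-specification of group} (so in particular $N\ge r$ and $\{a^k:k\ge0\}B(r)\cap\{a^k:k\le0\}B(r)\subset B(N)$), and suppose $x,x'\in\Delta_X(\bar x)$ satisfy $x_{B(N)}=x'_{B(N)}$. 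Define $y\in A^G$ by
\begin{align*}
y_g=\begin{cases} x_g & g\in\{a^k:k\ge0\}B(r),\\ x'_g & g\in\{a^k:k\le0\}B(r),\\ 0 & \text{otherwise.}\end{cases}
\end{align*}
This is well defined: on the overlap $\{a^k:k\ge0\}B(r)\cap\{a^k:k\le0\}B(r)$, which by Lemma \ref{L-specification of group} is contained in $B(N)$, we have $x_g=x'_g$ by hypothesis, so the first two cases agree there. Moreover $y$ differs from $\bar x$ only on the union of $\{a^k:k\ge0\}B(r)$ and $\{a^k:k\le0\}B(r)$ intersected with the (finite) supports of $x$ and $x'$, hence on a finite set, so $y\in\Delta_X(\bar x)$. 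By construction $y$ agrees with $x$ on $\{a^k:k\ge0\}B(r)$, i.e. $(x,y)\in\Delta^+(a,r)$, and $y$ agrees with $x'$ on $\{a^k:k\le0\}B(r)$, i.e. $(x',y)\in\Delta^-(a,r)$. Finally $(x,y)$ and $(x',y)$ are homoclinic since all three configurations are finitely supported, so they lie in $\Delta_X$, completing the verification.

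The only subtle point — and the one worth stating carefully — is the well-definedness of $y$ on the overlap of the forward and backward half-orbit neighbourhoods; this is exactly what Lemma \ref{L-specification of group} was set up to control, and it is why the hypothesis ``$a$ is not torsion'' is needed (for torsion $a$ the two half-orbits would coincide and the overlap would not be confined to any ball). Everything else is bookkeeping: finiteness of supports guarantees membership in $\Delta_X(\bar x)$, and density is immediate for the full shift.
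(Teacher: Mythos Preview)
Your proof is correct and follows essentially the same approach as the paper: both observe density via finitely-supported configurations (the paper phrases it using $G$-invariance of $\bar{x}$, you use $a^{-1}\Delta_X(\bar{x})=\Delta_X(\bar{x})$, which amounts to the same thing), and both construct $y$ by the identical three-case gluing formula, invoking Lemma~\ref{L-specification of group} for well-definedness on the overlap. Your write-up is somewhat more explicit about why $y$ has finite support and why the pairs lie in $\Delta_X$, but the argument is the same.
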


\begin{proof}

For any $z\in A^G$ and any finite set $F\subset G$, if we define $y\in A^G$ by setting $y_g=z_g$ if $g\in F$ and $y_g=\bar{x}_g$ if $g\not\in F$, then since $\bar{x}$ is $G$-invariant, $y\in \Delta_X(\bar{x})\cap a^{-1}\Delta_X(\bar{x})$. Hence $\Delta_X(\bar{x})\cap a^{-1}\Delta_X(\bar{x})$ is dense in $X$.

Now, suppose $x,x'\in \Delta_X(\bar{x})$ and they satisfy that $x_{B(N(a, r))}=x'_{B(N(a, r))}$, then we can define $y\in \Delta_X(\bar{x})$ by setting $y_g=x_g$ if $g\in {\{a^{k}: k\geq 0\}B(r)}$, $y_g=x'_g$ if $g\in {\{a^{k}: k\leq 0\}B(r)}$ and $y_g=\bar{x}_g$ elsewhere. Note that Lemma \ref{L-specification of group} guarantees this $y$ is well-defined.
\end{proof}

\begin{lem}
\label{L-plus and minus cocycles equal imply the extension of cocycle}
Let $X\subset A^G$ be a subshift and $f: X\to H$ be a continuous map, where $H$ is a discrete group. Assume that $\Delta_X(\bar{x})$ has $g$-specification and the cocycles $c_f^{(g),\pm}:\Delta_X\to H$ in (*) and (**) are equal.
Then $\lim_{\substack{(x,x')\to\Delta \\ x,x'\in\Delta_X(\bar{x})}}d(c_f^{(g),+}(x,x'),e_H)=0$, where $\Delta\subset X\times X$ denotes the diagonal.
\end{lem}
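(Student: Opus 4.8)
The plan is to unwind the definitions and reduce the limit statement to the $g$-specification property. Fix $\varepsilon > 0$; since $H$ is discrete and $d$ is the discrete metric, it suffices to find a neighborhood of the diagonal $\Delta$ inside $(\Delta_X(\bar x)\times\Delta_X(\bar x))$ on which $c_f^{(g),+}(x,x') = e_H$. Choose, as in the construction of $c_f^{(g),+}$, a finite set $F_1\subset G$ with $f(y)=f(z)$ whenever $y_{F_1}=z_{F_1}$, and pick $r\geq 0$ large enough that $F_1\subset B(r)$. Let $N = N(g,r)\geq r$ be the integer supplied by $g$-specification for this $r$. I claim that if $x,x'\in\Delta_X(\bar x)$ satisfy $x_{B(N)} = x'_{B(N)}$, then $c_f^{(g),+}(x,x') = e_H$; this is the content to be verified, and it proves the lemma since $\{(x,x') : x_{B(N)} = x'_{B(N)}\}$ is a neighborhood of $\Delta$.

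To prove the claim, apply $g$-specification to $x,x'$: there is $y\in\Delta_X(\bar x)$ with $(x,y)\in\Delta^+(g,r)$ and $(x',y)\in\Delta^-(g,r)$. The cocycle $c_f^{(g),+}$ on $\Delta_X$ satisfies the cocycle identity, so $c_f^{(g),+}(x,x') = c_f^{(g),+}(x,y)\,c_f^{(g),+}(y,x')$. Now $(x,y)\in\Delta^+(g,r)$ means $x_{\{g^k:k\geq 0\}B(r)} = y_{\{g^k:k\geq 0\}B(r)}$; since $F_1\subset B(r)$, this gives $x_{g^{-k}F_1} = y_{g^{-k}F_1}$, i.e. $(g^kx)_{F_1} = (g^ky)_{F_1}$, hence $f(g^kx) = f(g^ky)$ for all $k\geq 0$. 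Therefore every partial product $c_f^{(g),+,(m)}(x,y)$ is trivial, and $c_f^{(g),+}(x,y) = e_H$. For the second factor, use the hypothesis that $c_f^{(g),+} = c_f^{(g),-}$: we have $c_f^{(g),+}(y,x') = c_f^{(g),-}(y,x')$, and $(x',y)\in\Delta^-(g,r)$, i.e. $(y,x')\in\Delta^-(g,r)$, gives $f(g^{-k}y) = f(g^{-k}x')$ for all $k\geq 1$ by the same argument applied to $\{g^{-k}:k\geq 0\}B(r)\supset g^{-k}F_1$; hence every partial product $c_f^{(g),-,(m)}(y,x')$ is trivial and $c_f^{(g),-}(y,x') = e_H$. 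Combining, $c_f^{(g),+}(x,x') = e_H\cdot e_H = e_H$, as claimed.

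The only mildly delicate point is the bookkeeping showing $x_{\{g^k:k\geq 0\}B(r)} = y_{\{g^k:k\geq 0\}B(r)}$ implies $(g^kx)_{F_1} = (g^ky)_{F_1}$ for $k\geq 0$: one has $(g^kx)_h = x_{g^{-k}h}$, so the condition $(g^kx)_{F_1} = (g^ky)_{F_1}$ is exactly $x_{g^{-k}F_1} = y_{g^{-k}F_1}$, and $g^{-k}F_1\subset g^{-k}B(r)\subset\{g^{-j}:j\geq 0\}B(r)$; but $\Delta^+$ was defined with $\{g^k:k\geq 0\}B(r)$ — so I must be careful about signs here. Rechecking against the definition of $c_f^{(g),+,(m)}$, which involves $f(g^kx)$ for $k\geq 0$ and whose stabilization used $g^{-k}F_1\subset G\setminus F$: the relevant coordinates are $g^{-k}F_1$ for $k\geq 0$, so the correct condition to extract from $(x,y)\in\Delta^+(g,r)$ is agreement on $\{g^{-k}:k\geq 0\}B(r)$. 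I expect the definition of $\Delta^\pm(a,r)$ to be set up precisely so that the signs match the cocycles $c_f^{(g),\pm}$; I will verify this orientation carefully when writing out the final proof, as it is the one place an off-by-sign error could creep in. Everything else is a direct application of the cocycle identity and the discreteness of $H$.
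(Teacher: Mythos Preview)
Your approach is essentially the paper's: pick $r$ so that $F_1\subset B(r)$, use $g$-specification to produce $y$, and split $c_f^{(g),+}(x,x')$ through $y$ using the cocycle identity together with the hypothesis $c_f^{(g),+}=c_f^{(g),-}$. The only issue is the sign worry you raised at the end, and your guess about how it resolves is backwards. With the paper's conventions one has $(g^kx)_{F_1}=x_{g^{-k}F_1}$, so $(x,y)\in\Delta^+(g,r)$ (agreement on $\{g^k:k\ge 0\}B(r)$) forces $f(g^{-k}x)=f(g^{-k}y)$ for $k\ge 0$ and hence $c_f^{(g),-}(x,y)=e_H$; symmetrically, $(x',y)\in\Delta^-(g,r)$ forces $f(g^kx')=f(g^ky)$ for $k\ge 0$ and hence $c_f^{(g),+}(y,x')=e_H$. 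In other words, $\Delta^+$ kills $c^-$ and $\Delta^-$ kills $c^+$, not the other way round. With that swap your argument reads: $c_f^{(g),+}(x,x')=c_f^{(g),+}(x,y)\,c_f^{(g),+}(y,x')$, the second factor is $e_H$ directly, and the first factor equals $c_f^{(g),-}(x,y)=e_H$ by the hypothesis. This is exactly what the paper does (it phrases the splitting as a distance estimate rather than via the cocycle identity, but the content is identical).
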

\begin{proof}
Since $H$ is discrete and $f$ is continuous, there exists a non-empty finite subset $F$ of $G$ such that $f(y)=f(z)$ for every $y,z\in X$ with $y_F=z_F$. Choose $r_0\in\N$ such that $F\subset B(r_0)$. Then for every $r\geq r_0,k\geq 0, (x,x')\in \Delta^+(g,r)$, one has $(g^{-k}x)_F=x_{g^{k}F}= x'_{g^{k}F}=(g^{-k}x')_F$ and hence $f(g^{-k}x)=f(g^{-k}x')$. Then since $d$ is bi-invariant, for every $r\geq r_0$, $(x,x')\in \Delta^+(g,r)$, one has
 \begin{eqnarray*}
 d(c_f^{(g),-}(x,x'),e_H)&=&\lim_{m\to \infty}d\Big(\Big(\prod_{k=1}^{m-1}f(g^{-k}x)
\Big)\Big(\prod_{k=1}^{m-1}f(g^{-k}x')\Big)^{-1},e_H\Big)\\
&= & \lim_{m\to \infty}d\Big(\prod_{k=1}^{m-1}f(g^{-k}x),
\prod_{k=1}^{m-1}f(g^{-k}x')\Big)\\
&\leq &\lim_{m\to \infty}\sum_{k=1}^{m-1}d(f(g^{-k}x),f(g^{-k}x'))=0.
 \end{eqnarray*}
Since $\Delta_X(\bar{x})$ has $g$-specification, for every $r\geq 0$, there exists $N=N(g, r)\geq r$ such that for all $x,x'\in \Delta_X(\bar{x})$ with $x_{B(N(g, r))}=x'_{B(N(g, r))}$ there exists an element $y\in \Delta_X(\bar{x})$ with $(x,y)\in \Delta^+(g,r)$ and $(x',y)\in \Delta^-(g,r)$. As before, whenever $r\geq r_0$ we know that $f(g^kx')=f(g^ky)$ for every $k\geq 0$ and then
\begin{eqnarray*}
&&\lim_{m\to \infty}d\Big(\Big(\prod_{k=0}^{m-1}f(g^kx)^{-1}
\Big)\Big(\prod_{k=0}^{m-1}f(g^kx')^{-1}\Big)^{-1},\Big(\prod_{k=0}^{m-1}f(g^kx)^{-1}
\Big)\Big(\prod_{k=0}^{m-1}f(g^ky)^{-1}\Big)^{-1}\Big)\\
&=&\lim_{m\to \infty}d\Big(\Big(\prod_{k=m-1}^{0}f(g^kx')
\Big),\Big(\prod_{k=m-1}^{0}f(g^ky)
\Big)\Big)=0.
\end{eqnarray*}
Hence, for every $x,x'\in \Delta_X(\bar{x})$ with $x_{B(N(g, r))}=x'_{B(N(g, r))}, r\geq r_0$, one has $$d(c_f^{(g),+}(x,x'), c_f^{(g),+}(x,y))=0$$ and thus
\begin{eqnarray*}
d(c_f^{(g),+}(x,x'),e_H)&\leq &d(c_f^{(g),+}(x,x'),c_f^{(g),+}(x,y))+d(c_f^{(g),+}(x,y),e_H)\\
&=&d(c_f^{(g),+}(x,x'),c_f^{(g),+}(x,y))+d(c_f^{(g),-}(x,y),e_H)=0.
\end{eqnarray*}
Taking $r\to \infty$, we finish the proof.
\end{proof}

\section{One end is sufficient}
In this section, we will prove Theorem \ref{T-one end is sufficient}. To prove it we will prepare several lemmas.

Let $G$ be a finitely generated group with $S$ being a finite symmetric generating set and $g\in G$ be an element with infinite order. Let $X\subset A^G$ be a subshift for some finite set $A$ and $H$ be a discrete group with discrete metric $d$. Let $f:X\to H$ be a continuous function.
\begin{lem}
\label{L-specification and equal of cocycles imply directional triviality of cocycles}
 Assume that there exists a point $\bar{x}\in X$ such that $\Delta_X(\bar{x})\cap g^{-1}\Delta_X(\bar{x})$ is dense in $X$ and $\lim_{\substack{(x,x')\to\Delta \\ x,x'\in\Delta_X(\bar{x})}}d(c_f^{(g),+}(x,x'),e_H)=0$, then there is a continuous map $b:X\to H$ such that the map $x\mapsto b(gx)^{-1}f(x)b(x)$ is constant on $X$.
\end{lem}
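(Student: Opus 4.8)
The plan is to produce the transfer map $b$ in two stages: first define it directly on the dense homoclinic class $\Delta_X(\bar{x})$ by means of the cocycle $c_f^{(g),+}$ of $(*)$, check that it already conjugates $f$ to a constant there, and then use the hypothesis $\lim_{(x,x')\to\Delta}d(c_f^{(g),+}(x,x'),e_H)=0$ to extend it continuously to all of $X$. To set things up I would fix once and for all a reference point $x_0$ in the dense set $Y:=\Delta_X(\bar{x})\cap g^{-1}\Delta_X(\bar{x})$; then $\Delta_X(x_0)=\Delta_X(\bar{x})$ and, since $x_0\in g^{-1}\Delta_X(\bar{x})$, also $(gx_0,x_0)\in\Delta_X$. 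Define $b_0:\Delta_X(\bar{x})\to H$ by $b_0(x):=c_f^{(g),+}(x,x_0)$; this is legitimate because $(x,x_0)\in\Delta_X$ for every $x\in\Delta_X(\bar{x})$ and the limit in $(*)$ stabilizes.

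Next comes the algebraic core. Writing $c_f^{(g),+,(m)}$ as the product of the factors $f(g^{k}\,\cdot\,)^{-1}$ and re-indexing $k\mapsto k+1$ yields the covariance identity $c_f^{(g),+}(gx,gz)=f(x)\,c_f^{(g),+}(x,z)\,f(z)^{-1}$ for all homoclinic pairs $(x,z)$. For $x\in Y$ one has $gx\sim gx_0\sim x_0$, so the cocycle relation for $c_f^{(g),+}$ gives $c_f^{(g),+}(gx,x_0)=c_f^{(g),+}(gx,gx_0)\,c_f^{(g),+}(gx_0,x_0)$. Combining the two identities and writing $u:=c_f^{(g),+}(gx_0,x_0)$, I obtain $b_0(gx)=f(x)\,b_0(x)\,f(x_0)^{-1}u$, hence
$$b_0(gx)^{-1}f(x)b_0(x)=u^{-1}f(x_0),$$
which is independent of $x\in Y$.

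Then I would extend $b_0$ to a continuous $b:X\to H$ and conclude. Since $H$ is discrete, the hypothesis gives a $\delta>0$ such that $y,y'\in\Delta_X(\bar{x})$ with $d_{X\times X}((y,y'),\Delta)<\delta$ force $c_f^{(g),+}(y,y')=e_H$; by the cocycle identity $b_0(y)b_0(y')^{-1}=c_f^{(g),+}(y,y')$, so $b_0$ is locally constant, in particular uniformly continuous on the dense class $\Delta_X(\bar{x})$ (it is dense since it contains $Y$), and therefore extends uniquely to a continuous $b:X\to H$. Concretely, for $x\in X$ take $x_n\to x$ with $x_n\in\Delta_X(\bar{x})$; then $(x_n,x_m)\in\Delta_X$ tends to the diagonal, so $b_0(x_n)b_0(x_m)^{-1}=c_f^{(g),+}(x_n,x_m)\to e_H$, the sequence $(b_0(x_n))$ is eventually constant, and one sets $b(x)$ equal to this value, with independence of the sequence and continuity of $b$ checked in the routine way. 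Finally $x\mapsto b(gx)^{-1}f(x)b(x)$ is continuous on $X$ (the $G$-action is by homeomorphisms and $f,b$ are continuous); since $Y\cup gY\subseteq\Delta_X(\bar{x})$ and $b|_{\Delta_X(\bar{x})}=b_0$, it agrees with $u^{-1}f(x_0)$ on the dense set $Y$, hence it equals this constant on all of $X$.

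I expect the extension step to be the delicate point: all of the displayed algebra is a mechanical consequence of the cocycle identity, but extracting genuine uniform continuity of $b_0$ along the dense class $\Delta_X(\bar{x})$ from the weak-looking hypothesis that $c_f^{(g),+}(x,x')\to e_H$ as the homoclinic pair $(x,x')$ approaches the diagonal — which is exactly what makes the continuous extension $b$ exist — is where the real content lies.
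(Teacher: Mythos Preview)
Your proof is correct and follows essentially the same route as the paper's: define $b$ on the homoclinic class $\Delta_X(\bar x)$ via $c_f^{(g),+}(\,\cdot\,,\text{base point})$, use the hypothesis and the cocycle identity $b_0(y)b_0(y')^{-1}=c_f^{(g),+}(y,y')$ to see that $b_0$ is locally constant and hence extends continuously to $X$, and verify the constancy of $b(gx)^{-1}f(x)b(x)$ on the dense set $Y$ by the covariance identity $c_f^{(g),+}(gx,gz)=f(x)c_f^{(g),+}(x,z)f(z)^{-1}$. The only cosmetic differences are that the paper takes $\bar x$ itself as the reference point (so it does not need $\bar x\in Y$ and instead compares two arbitrary points of $Y$ rather than computing the constant explicitly as $u^{-1}f(x_0)$), and packages the extension via an auxiliary function $\hat c$ rather than the eventually-constant-sequence argument you give; the content is the same.
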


\begin{proof}
Since $\lim_{\substack{(x,x')\to\Delta \\ x,x'\in\Delta_X(\bar{x})}}d(c_f^{(g),+}(x,x'),e_H)=0$, $c_f^{(g),+}$ is a cocycle and $\Delta_X(\bar{x})$ is dense in $X$, one can define a function $\hat{c}:X\times \Delta_X(\bar{x})\to H$ by $\hat{c}(x,y):=\lim_{n\to\infty}c_f^{(g),+}(x_n,y)$, for some (and every) sequence $\{x_n\}\subset \Delta_X(\bar{x})$ with $x_n\to x$. Then we define the map $b:X\to G$ by $b(x):=\hat{c}(x,\bar{x})$ for every $x\in X$. 

We claim the function $b$ is continuous.

Given $\epsilon>0$ and $x\in X$, first take $\delta>0$ such that if $d(z, w)<\delta$ and $z, w\in\Delta_X(\bar{x})$, then $d(c_f^{(g),+}(z, w), e_H)<\frac{\epsilon}{3}$, this is possible by the assumption. Now, for any $y\in X$ with $d(x, y)<\frac{\delta}{2}$, and any fixed $x_n\to x, y_n\to y$ with $x_n, y_n\in\Delta_X(\bar{x})$ for all $n\geq 1$, we take $n$ large enough such that $d(x_n, y_n)<\delta$, $d(b(x), b(x_n))<\frac{\epsilon}{3}$ and $d(b(y), b(y_n))<\frac{\epsilon}{3}$ (by the definition of $b$), note that then $d(b(x_n), b(y_n))=d(c_f^{(g),+}(x_n, y_n), e_H)<\frac{\epsilon}{3}$ by the choice of $\delta$, now we get

$d(b(x), b(y))\leq d(b(x), b(x_n))+d(b(x_n), b(y_n))+d(b(y_n), b(y))<\epsilon$.

One also has $b(x)b(x')^{-1}=\hat{c}(x,\bar{x})\hat{c}(x',\bar{x})^{-1}=c_f^{(g),+}(x,\bar{x})c_f^{(g),+}(x',\bar{x})^{-1}=c_f^{(g),+}(x,x')$ for all $(x,x')\in \Delta_X(\bar{x})\times \Delta_X(\bar{x})$.

We mention that (although we do not need this fact in the proof) we can further extend $\hat{c}$ to a function (which we still denote by) $\hat{c}: X\times X\to H$ by setting $\hat{c}(x, x'):=\lim_{x_n\to x, y_n\to x', x_n, y_n\in\Delta_X(\bar{x})}c_f^{(g),+}(x_n, y_n)$. Now we check it is well-defined. Take any $x_n, y_n, z_n, w_n\in \Delta_X(\bar{x})$ for all $n\geq 1$ and $x_n\to x, z_n\to x, y_n\to y, w_n\to y$, we have the following:
\begin{eqnarray*}
&&d(c_f^{(g),+}(x_n, y_n), c_f^{(g),+}(z_n, w_n))\\
&=&d(c_f^{(g),+}(x_n, z_n)c_f^{(g),+}(z_n, w_n)c_f^{(g),+}(w_n, y_n), c_f^{(g),+}(z_n, w_n))\\
&\leq& d(c_f^{(g),+}(x_n, z_n)c_f^{(g),+}(z_n, w_n)c_f^{(g),+}(w_n, y_n), c_f^{(g),+}(z_n, w_n)c_f^{(g),+}(w_n, y_n))\\&+& d(c_f^{(g),+}(z_n, w_n)c_f^{(g),+}(w_n, y_n), c_f^{(g),+}(z_n, w_n))\\
&=&d(c_f^{(g),+}(x_n, z_n), e_H)+d(c_f^{(g),+}(w_n, y_n), e_H)\to 0.\\
\end{eqnarray*}
Then it is clear that $b(x)b(x')^{-1}=\hat{c}(x, x')$ for all $(x, x')\in X\times X$, hence $\hat{c}$ is a continuous function.
%

Let $x, x'\in\Delta_X(\bar{x})\cap g^{-1}\Delta_X(\bar{x})$, then we deduce
\begin{eqnarray*}
b(x)b(x')^{-1}&=& c_f^{(g), +}(x, x')\\
&=&\lim_{m\to\infty}(\prod_{k=0}^mf(g^kx)^{-1})(\prod_{k=0}^mf(g^kx')^{-1})^{-1}\\
&=&\lim_{m\to\infty}f(x)^{-1}(\prod_{k=1}^mf(g^kx)^{-1})(\prod_{k=1}^mf(g^kx')^{-1})^{-1}f(x')\\
&=&f(x)^{-1}c_f^{(g), +}(gx, gx')f(x')\\
&=&f(x)^{-1}b(gx)b(gx')^{-1}f(x').\\
\end{eqnarray*}
Since $\Delta_X(\bar{x})\cap g^{-1}\Delta_X(\bar{x})$ is dense in $X$ and $b,f$ are continuous functions, the above holds for all $x$ in $X$, we finish the proof.

\end{proof}
\begin{remark}
In the above proof, the assumption $\lim_{\substack{(x,x')\to\Delta \\ x,x'\in\Delta_X(\bar{x})}}d(c_f^{(g),+}(x,x'),e_H)=0$ guarantees that $c_f^{(g), +}$ is continuous on $\Delta_X(\bar{x})\times \Delta_X(\bar{x})$, but may not necessarily on the bigger space $\Delta_X\subset X\times X$, although $c_f^{(g), +}$ is well-defined on $\Delta_X$; however, the cocycle identity ensures that starting from $c_f^{(g), +}|_{\Delta_X(\bar{x})\times \Delta_X(\bar{x})}$, the extension $\hat{c}$ is continuous on $X\times X$. It is not clear whether $\hat{c}$ is equal to $c_f^{(g), +}$ on $\Delta_X\setminus (\Delta_X(\bar{x})\times \Delta_X(\bar{x}))$. 
\end{remark}

We have the following lemma.

\begin{lem}\label{trivial cocycles in two directions generates a subgroup}
Assume $G$ has one end, $g_i\in G$ is of infinite order, and there exists a point $\bar{x}\in X$ such that $\Delta_X(\bar{x})\cap {g_i}^{-1}\Delta_X(\bar{x})$ is dense in $X$ and $$\lim_{\substack{(x,x')\to\Delta \\ x,x'\in\Delta_X(\bar{x})}}d(c_{f_{g_i}}^{(g_i),+}(x,x'),e_H)=0,$$ for all $i=1,\cdots, k$, where $k$ is any positive integer (maybe infinity), then there is a continuous map $b: X\to H$ such that the map $x\mapsto b(gx)^{-1}c(g, x)b(x)$ is constant on $X$ (depending only on $g$) for all $g\in \langle g_1, \cdots, g_k \rangle$, where $c: G\times X\to H$ is a continuous cocycle and $H$ is discrete.
\end{lem}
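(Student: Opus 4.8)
The plan is to combine Lemma~\ref{L-specification and equal of cocycles imply directional triviality of cocycles}, Lemma~\ref{test to show same transfer map for different directions}, and Lemma~\ref{cocycle triviality passes to normalizer} in the following way. First, for each fixed $i$, the hypotheses say precisely that the assumption of Lemma~\ref{L-specification and equal of cocycles imply directional triviality of cocycles} holds with $f=f_{g_i}=c(g_i,\cdot)$ and $g=g_i$, so there is a \emph{continuous} map $b_i:X\to H$ with $x\mapsto b_i(g_ix)^{-1}c(g_i,x)b_i(x)$ constant on $X$. The point of that lemma's proof is that $b_i$ is built from the Livšic-type test cocycle: $b_i(x)b_i(x')^{-1}=c_{f_{g_i}}^{(g_i),+}(x,x')$ for all $(x,x')\in\Delta_X(\bar x)\times\Delta_X(\bar x)$, and $b_i(\bar x)=e_H$. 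Now invoke Lemma~\ref{test to show same transfer map for different directions}: since $G$ has one end and all $g_i$ are non-torsion, $c_{f_{g_i}}^{(g_i),+}=c_{f_{g_j}}^{(g_j),+}$ on all of $\Delta_X$. Hence $b_i(x)b_i(x')^{-1}=b_j(x)b_j(x')^{-1}$ on $\Delta_X(\bar x)\times\Delta_X(\bar x)$, and evaluating at $x'=\bar x$ gives $b_i(x)=b_j(x)$ for all $x\in\Delta_X(\bar x)$; by density of $\Delta_X(\bar x)$ and continuity of all the $b_i$, we get $b_i=b_j$ on $X$. Call this common map $b$.

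Next I would check that this single $b$ works in every direction $g_i$ simultaneously: by construction $x\mapsto b(g_ix)^{-1}c(g_i,x)b(x)$ is constant for each $i$. Let $G_0:=\langle g_1,\dots,g_k\rangle$ and let $c'(g,x):=b(gx)^{-1}c(g,x)b(x)$, which is again a continuous cocycle. I claim $c'(g,\cdot)$ is constant for \emph{every} $g\in G_0$. This follows from the cocycle identity for $c'$: if $c'(g,\cdot)$ and $c'(h,\cdot)$ are constant, say with values $L(g),L(h)$, then $c'(gh,x)=c'(g,hx)c'(h,x)=L(g)L(h)$ is constant, and $c'(g^{-1},x)=c'(g,g^{-1}x)^{-1}=L(g)^{-1}$ is constant; since $\{g_1^{\pm1},\dots,g_k^{\pm1}\}$ generates $G_0$, every element of $G_0$ is a finite product of such and the claim propagates by induction on word length. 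This gives exactly the conclusion: $x\mapsto b(gx)^{-1}c(g,x)b(x)$ is constant (depending only on $g$) for all $g\in G_0=\langle g_1,\dots,g_k\rangle$.

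The main obstacle is the consistency step — showing the a priori direction-dependent transfer maps $b_i$ coincide — and this is exactly where the one-endedness of $G$ is used in an essential way, via Lemma~\ref{test to show same transfer map for different directions}. Without equality of the test cocycles $c_{f_{g_i}}^{(g_i),+}$ across different non-torsion directions, there would be no reason for the $b_i$ to agree and the argument collapses (as the counterexample for $\Z$ shows). One minor technical point to handle carefully: Lemma~\ref{test to show same transfer map for different directions} is stated for a subshift and full generality in $\bar x$ is not needed, but we do need the equality $c_{f_{g_i}}^{(g_i),+}=c_{f_{g_j}}^{(g_j),+}$ on the pairs in $\Delta_X(\bar x)\times\Delta_X(\bar x)$, which is covered since $\Delta_X(\bar x)\times\Delta_X(\bar x)\subset\Delta_X$. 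A further remark: when $k=\infty$ the argument is unchanged — the equality $b_i=b_j$ is pairwise, so a single common $b$ is still obtained, and $G_0=\bigcup_n\langle g_1,\dots,g_n\rangle$ so constancy of $c'(g,\cdot)$ for $g\in G_0$ follows from the finite case applied to whichever finitely generated subgroup contains $g$.
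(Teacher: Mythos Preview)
Your proof is correct and follows essentially the same approach as the paper: invoke Lemma~\ref{L-specification and equal of cocycles imply directional triviality of cocycles} to build each $b_i$, use Lemma~\ref{test to show same transfer map for different directions} (one-endedness) to show the $b_i$ all coincide, and then propagate constancy of $c'(g,\cdot)$ to the generated subgroup via the cocycle identity. One small remark: you list Lemma~\ref{cocycle triviality passes to normalizer} in your plan but (correctly) never use it --- it is not needed here, and the paper does not invoke it in this proof either.
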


\begin{proof}
From the proof of Lemma \ref{L-specification and equal of cocycles imply directional triviality of cocycles}, we know that for $i=1,\cdots, k$, the continuous map $b_i: X\to H$ defined by $b_i(x):=\lim_{n\to\infty}c_{f_{g_i}}^{(g_i),+}(x_n, \bar{x})$ where $x_n\in\Delta_X(\bar{x})$ and $x_n\to x$ will make the map $x\mapsto b_i(g_ix)^{-1}c(g_i, x)b_i(x)$ constant (only depending on $g_i$). Now we claim that $b_i(x)=b_j(x)$ for all $x\in X$ and all $1\leq i, j\leq k$.

By Lemma \ref{test to show same transfer map for different directions}, we know for any $n$, $c_{f_{g_i}}^{(g_i),+}(x_n, \bar{x})=c_{f_{g_j}}^{(g_j),+}(x_n, \bar{x})$. Hence, $b_i(x)=b_j(x)$. Then, we can define $b(x):=b_1(x)$.

Now, from the identities $b(g^{-1}x)^{-1}c(g^{-1}, x)b(x)=[b(gg^{-1}x)^{-1}c(g, g^{-1}x)b(g^{-1}x)]^{-1}$ and $b(ghx)^{-1}c(gh, x)b(x)=[b(ghx)^{-1}c(g, hx)b(hx)][b(hx)^{-1}c(h, x)b(x)]$, it is clear that the map $x\mapsto b(gx)^{-1}c(g, x)b(x)$ is constant on $X$ (depending only on $g$) for all $g\in \langle g_1, \cdots, g_k \rangle$.
\end{proof}

\begin{lem}\label{torsion-free elements form a normal subgroup}
Let $G$ be an infinite group that is not torsion and $G_{\infty}$ be the subgroup generated by all non-torsion elements in $G$, 
then $G_{\infty}$ is an infinite normal subgroup of $G$.
\end{lem}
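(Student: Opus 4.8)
The statement is elementary, and the plan is simply to unwind the definition of $G_\infty$ and use that conjugation is an automorphism. There are two things to check: that $G_\infty$ is infinite, and that it is normal.

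For infiniteness, I would argue as follows. Since $G$ is not torsion, by hypothesis there exists an element $a \in G$ of infinite order. By definition $a \in G_\infty$, hence the cyclic subgroup $\langle a \rangle \leq G_\infty$. As $a$ has infinite order, $\langle a \rangle \cong \Z$ is infinite, so $G_\infty$ is infinite.

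For normality, the key observation is that the set $T := \{g \in G : g \text{ has infinite order}\}$ is invariant under conjugation. Indeed, fix $g \in G$ and $t \in T$; for any $n \in \Z$ we have $(gtg^{-1})^n = g t^n g^{-1}$, which equals the identity if and only if $t^n = e$. Since $t$ has infinite order, $t^n \neq e$ for all $n \neq 0$, so $(gtg^{-1})^n \neq e$ for all $n \neq 0$, i.e. $gtg^{-1} \in T$. Thus $gTg^{-1} \subseteq T$, and applying the same to $g^{-1}$ gives $g^{-1}Tg \subseteq T$, i.e. $T \subseteq gTg^{-1}$, so $gTg^{-1} = T$. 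Now for any $g \in G$, conjugation by $g$ is an automorphism of $G$, so it carries $\langle T \rangle$ to $\langle gTg^{-1}\rangle = \langle T \rangle$; that is, $g G_\infty g^{-1} = G_\infty$. Hence $G_\infty$ is a normal subgroup of $G$.

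I do not expect any real obstacle here; the only points requiring a word of care are the trivialities that $G_\infty$ is genuinely a subgroup (being generated by a subset of $G$) and that the order of a group element is preserved under conjugation, both of which are standard. The content of the lemma is really just that it sets up $G_\infty$ for later use (e.g., in the proof of Theorem~\ref{T-one end is sufficient}, where one needs a normal subgroup generated by non-torsion elements on which the transfer map is shown to work).
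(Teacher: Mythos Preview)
Your proof is correct and follows the same idea as the paper: the paper's one-line argument is simply ``This is obvious since $\mathrm{order}(ghg^{-1})=\mathrm{order}(h)$,'' and you have spelled out exactly this observation (plus the easy infiniteness check) in full detail.
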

\begin{proof}
This is obvious since $order(ghg^{-1})=order(h)$.
\end{proof}

\begin{proof}[Proof of Theorem \ref{T-one end is sufficient}]

We apply successively Corollary \ref{L-coincidence of plus and minus Livsic's test cocycles}, Lemma \ref{L-plus and minus cocycles equal imply the extension of cocycle}, Lemma \ref{L-specification and equal of cocycles imply directional triviality of cocycles}, Lemma \ref{trivial cocycles in two directions generates a subgroup} to conclude that the map $x\mapsto b(gx)^{-1}c(g, x)b(x)$ is constant on $X$ (depending only on $g$) for all $g\in G_{\infty}$, where $G_{\infty}$ is the subgroup of $G$ generated by all non-torsion elements. Since $G_{\infty}$ is normal in $G$ by Lemma \ref{torsion-free elements form a normal subgroup}, then we apply Lemma  \ref{cocycle triviality passes to normalizer} to conclude that $c$ is a trivial cocycle on the whole group $G$.
\end{proof}
\subsection{Examples}
In this subsection, we provide more examples that are continuous superrigid actions besides full shifts. 

Given a dynamical system $(G,X)$, a point $x\in X$ is called \textit{periodic} if its orbit $\{gx:g\in G\}$ is finite. For $x\in X$, we denote by $stab(x):=\{g\in G:gx=x\}$, the stabilizer of $x$. It is clear that $stab(x)$ is a subgroup of $G$ for every $x\in X$, and $x$ is a periodic point if and only if $[G:stab(x)]<\infty$. Let $H$ be a subgroup with finite index, we define $X_H:=\{x\in X:hx=x, \mbox{ for all } h\in H\}$, the $H$-periodic points. Then the set of periodic points $Per(X)$ will be $\bigcup_{H<G, [G:H]<\infty}X_H$.

A subshift $X$ of $A^G$ is of \textit{finite type} if there is a finite set $F$ of $G$ and a subset $P$ of $A^F$ such that $$X=\{x\in A^G:(gx)_F\in P, \forall g\in G\}.$$
The subset $F$ and $P$ is called a \textit{defining window} and a \textit{set of allowed words} for $X$.
It is clear that if a shift of finite type is defined by a defining window $F$ and a set of allowed words $P\subset A^F$, then for every $F'\supset F$, it is also a shift of finite type defined by $F'$ and $P':=\{x\in A^{F'}:x_F\in P\}$.

The following lemma is an extension of \cite[Lemma 2.5]{Schmidt95} from $\Z^d$-actions to actions of more general groups. The proof of the first part here is inspired by the proof of \cite[Theorem 1.1]{CC2012}.
\begin{lem}
\label{L-density of periodic points and homoclinic points}
Let $X\subset A^G$ be a subshift of finite type. Suppose that there exists a periodic point $\bar{x}$ such that $\Delta_X(\bar{x})$ is dense in $X$.
\begin{enumerate}
\item
If $G$ is a residually finite group, then $\Per(X)$ is dense in $X$.
\item If $G$ is a countably infinite group, then $(G, X)$ is topological mixing of every order.
\end{enumerate}
\end{lem}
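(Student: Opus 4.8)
The plan is to exploit the only structural hypothesis available — that $\bar x$ is periodic with $\Delta_X(\bar x)$ dense — together with the ``locality'' of the finite-type constraint, namely that membership in $X$ is detected on translates of a single finite window $F$. I will treat the two parts separately, but both rest on the same gluing principle: if a configuration agrees with $\bar x$ outside a large finite set, the finite-type condition can only fail on finitely many translates of $F$, and those failures occur near the boundary of that finite set; so by enlarging the ``$\bar x$-region'' enough we can make arbitrary patches cohabit inside a genuine point of $X$.

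For part (1), fix a nonempty open set, which without loss we take to be a cylinder $[w]$ determined by a pattern $w$ on a finite set $E\subset G$; we want a periodic point agreeing with $w$ on $E$. Since $\Delta_X(\bar x)$ is dense, choose $z\in\Delta_X(\bar x)$ with $z_E=w$, so $z$ agrees with $\bar x$ off some finite set $E'\supseteq E$. Now use residual finiteness of $G$: let $H_0:=\mathrm{stab}(\bar x)$, a finite-index subgroup (as $\bar x$ is periodic); by residual finiteness of $G$ (and of the finite-index subgroup $H_0$, or by intersecting with a finite-index normal subgroup) there is a finite-index \emph{normal} subgroup $H\le H_0$ with $H\cap (E'(E')^{-1})=\{e\}$, in fact with $H$ avoiding any prescribed finite set; moreover $H$ fixes $\bar x$. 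Define $x\in A^G$ to be $H$-periodic by declaring $x_g:=z_{h^{-1}g}$ whenever $g\in hE'$ for some (then unique, by the choice of $H$) coset representative consideration, and $x_g:=\bar x_g$ otherwise; concretely, $x$ is the $H$-periodization of the finite patch $z_{E'}$ placed on $\bar x$. Then $x$ is $H$-periodic, hence periodic, and $x_E=w$. It remains to check $x\in X$: for each $g\in G$, the window $gF$ either lies entirely in the region where $x$ agrees with a translate of $z$ (giving an allowed pattern since $z\in X$), or entirely in the region where $x=\bar x$ (allowed since $\bar x\in X$) — and by choosing $H$ to avoid the finite set $FF^{-1}E'(E')^{-1}$ (so that the disjoint $H$-translates of $E'$ are ``far apart'' relative to $F$) we ensure no translate $gF$ straddles two different translates of the patch, so one of the two cases always applies. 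This is the step modeled on \cite[Theorem 1.1]{CC2012}, and it is the main technical point of part (1).

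For part (2), fix $r\in\N$ and nonempty cylinders $U_1,\dots,U_r$, say $U_i=[w_i]$ with $w_i$ a pattern on a finite set $E_i\subset G$; we must produce a finite $Q\subset G$ so that whenever $g_1,\dots,g_r\in G$ satisfy $g_i^{-1}g_j\notin Q$ for $i<j$, we have $\bigcap_i g_iU_i\neq\varnothing$. Using $\Delta_X(\bar x)$ dense, pick $z_i\in\Delta_X(\bar x)$ with $(z_i)_{E_i}=w_i$, and let $E_i'\supseteq E_i$ be a finite set off which $z_i$ agrees with $\bar x$. Enlarge the defining window to $F$ and set $Q:=\bigcup_{i,j}E_i'(E_j')^{-1}\cdot FF^{-1}$ (a finite set). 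Given translates $g_i$ with $g_i^{-1}g_j\notin Q$, the sets $g_iE_i'$ are pairwise disjoint and moreover no translate $gF$ of the window meets two of them; so define $y$ by $y_h:=(g_iz_i)_h$ if $h\in g_iE_i'$ for some (unique) $i$, and $y_h:=\bar x_h$ otherwise. As in part (1), every window check $yF$ reduces to a check inside a single $g_iz_i$ (which lies in $X$ since $z_i\in X$ and $X$ is $G$-invariant) or inside $\bar x$, so $y\in X$; and $y\in g_iU_i$ for every $i$ since $(g_i^{-1}y)_{E_i}=(z_i)_{E_i}=w_i$. Since $G$ is infinite, $(G,X)$ is then topological mixing of order $r$; as $r$ was arbitrary, it is topological mixing of every order.

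The hard part is the gluing/finite-type verification — making precise that if the disjoint ``imported'' patches are separated by more than the diameter of $FF^{-1}$ (relative to the window $F$), then every translate of the window sees only one source, so that joining admissible patterns across a sea of the admissible point $\bar x$ stays admissible. Once that separation bookkeeping is set up, both parts follow; residual finiteness enters part (1) only to turn a single finite patch into a genuinely periodic point via periodization over a deep enough finite-index subgroup fixing $\bar x$.
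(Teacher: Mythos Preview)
Your outline for part (1) is essentially the paper's argument and is sound: because you periodize over a finite-index subgroup $H\le\mathrm{stab}(\bar x)$, every translate $hz$ with $h\in H$ agrees with $h\bar x=\bar x$ outside $hE'$, so a window meeting a single patch $hE'$ genuinely sees a restriction of $hz\in X$.

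Part (2), however, has a real gap. You glue the patches $(g_iz_i)|_{g_iE_i'}$ onto the background $\bar x$ and then assert that any window $gF$ meeting a single $g_iE_i'$ ``reduces to a check inside $g_iz_i$''. That is false in general: on $gF\setminus g_iE_i'$ your $y$ equals $\bar x$, while $g_iz_i$ equals $g_i\bar x$ there (since $z_i=\bar x$ off $E_i'$). Unless $g_i\in\mathrm{stab}(\bar x)$, these differ, so $y|_{gF}$ need not coincide with any point of $X$ and the finite-type check can fail at the boundary of the patch. The hypothesis is only that $\bar x$ is periodic, not fixed, and the $g_i$ are arbitrary, so you cannot assume $g_i\bar x=\bar x$.

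The paper repairs exactly this point: it fixes a finite-index normal subgroup $H\le\mathrm{stab}(\bar x)$, writes each $g_i=h_iz_{\sigma(i)}$ with $h_i\in H$ via a coset decomposition $G=\bigsqcup Hz_j$, and then places the patch for the open set $z_{\sigma(i)}\mathcal O_i$ at position $h_i$ rather than placing the patch for $\mathcal O_i$ at position $g_i$. Since $h_i\bar x=\bar x$, the boundary issue disappears and the window checks go through as you intended. Your argument becomes correct once you insert this coset decomposition (and correspondingly enlarge the ``forbidden'' finite set $Q$ to involve the coset representatives $z_j$).
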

\begin{proof}
Let $D$ be a finite subset of $G$ such that $e_G\in D$, $D$ is a defining window and $P$ is a set of allowed words $P\subset A^D$ for $X$.

(1) Let $x\in X$ and $\Omega$ be a finite subset of $G$. It suffices to show that there exists a periodic point $y$ in $X$ such that $x_\Omega=y_\Omega$. Since $\Delta_X(\bar{x})$ is dense in $X$, there exists $z\in \Delta_X(\bar{x})$ such that $z_\Omega=x_\Omega$. Let $F$ be a finite subset of $G$ such that $z_{G\setminus F}=\bar{x}_{G\setminus F}$. Put $F_1=FD^{-1}$ and $F_2=(F_1\cup\Omega)D^{-1}D$. Since $G$ is residually finite, we can find a normal subgroup $H$ of $G$ with finite index such that $g_1H\cap g_2H=\varnothing$ for every $g_1\neq g_2\in F_2$. As $stab(\bar{x})$ is a subgroup of finite index in $G$, applying Lemma \cite[Lemma 2.1.10]{CC2010}, there exists a normal subgroup $K$ of finite index in $G$ such that $stab(\bar{x})\supset K$. Then $L:=H\cap K$ is a normal subgroup of finite index in $G$. Let $y$ be an element in $A^G$ defined by \[
  y_g = \left\{
  \begin{array}{@{}c@{\quad}l@{}}
    z_{g_1} & \text{if $gL=g_1L$ for some $g_1\in F_2$}\\
    \bar{x}_g & \text{otherwise}\\
  \end{array}\right.
\]
Since $g_1H\cap g_2H=\varnothing$ for every $g_1\neq g_2\in F_2$, one has  $g_1L\cap g_2L=\varnothing$ for every $g_1\neq g_2\in F_2$, and hence $y$ is well defined.
We will prove that $y$ is a periodic point we are looking for. First, one has $y_\Omega=z_\Omega=x_\Omega$ since
 $\Omega\subset F_2$, and $y$ is a periodic point because $y$ is constant on every right coset of $L$. Now we only need to prove $y\in X$. It suffices to show that for every $g\in G$, there exists $u^{(g)}\in X$ such that $y_{gD}=u^{(g)}_{gD}$.

Now we consider the first case, if for every $g_1\in F_1$ and $g_2\in gD$, $g_1L\neq g_2L$ then we will show that $y_{g_2}=\bar{x}_{g_2},$ for every $g_2\in gD$. If $g_2L\neq hL$, for every $h\in F_2$, then by the definition of $y$, it is done. If there exists $k\in F_2\setminus F_1$ such that $g_2L=kL$ then $y_{g_2}=z_{k}=\bar{x}_k=\bar{x}_{g_2}$ since $z_{G\setminus F}=\bar{x}_{G\setminus F}$, $F\subset F_1$, and $\bar{x}$ is an $L$-periodic point.

Suppose now there exist $g_1\in F_1$ and $g_2=g\delta\in gD$ such that $g_1L=g_2L$. Since $L$ is normal, one has $Lg_1=Lg_2$. Then there exists $h\in L$ such that $h^{-1}g\delta=g_1$. We will show that $hz$ is the point we are looking for. For all $l=g\delta'\in gD$, one has $h^{-1}l=h^{-1}g\delta\delta^{-1}\delta'=g_1\delta^{-1}\delta'\in F_1D^{-1}D\subset F_2$, and hence
$y_l=(hy)_l=y_{h^{-1}l}=z_{h^{-1}l}=(hz)_l$.

(2) We may take a normal subgroup $H$ of $G$ with some finite index $n$ such that $H\subset stab(\bar{x})$, then we decompose $G$ as $\bigsqcup_{i=1}^nHz_i$ for finitely many $z_i\in G$ with $z_1=e_G$.

Fix $r\geq 2$ and nonempty, both closed and open sets $\cO_1, \cdots, \cO_r\subset X$, we claim that there exists a finite subset $F$ of $G$ such that $\cap_{1\leq i\leq r}g_i\cO_i\neq \varnothing $ whenever $g_1, \cdots, g_r$ are elements of $G$ and $g_i^{-1}g_j\not\in F$ for $1\leq i<j\leq r$.

Let $W=\{z_i\cO_j: 1\leq j\leq r ~\text{and}~ 1\leq i\leq n  \}$.

Since $\Delta_X(\bar{x})$ is dense in $X$, for any nonempty closed and open set $\cO\subset X$, there exist a finite set $F_{\cO}\subset G$ and a point $x^{\cO}\in \cO\cap \Delta_X(\bar{x})$ such that $x^{\cO}_{G\setminus F_{\cO} }=\bar{x}_{G\setminus F_{\cO} }$ and for any $y\in X$, if $y_{F_{\cO}}=x^{\cO}_{F_{\cO}}$, then $y\in \cO$.

Note that we may enlarge $F_{\cO}$ while preserving the above property, so without loss of generality, we may assume for every choice of $\cO$ in $W$, all $F_{{\cO}}$ are the same, and we denote it by $F_1$.

Define $F=\{z_i^{-1}F_1F_1^{-1}z_j: 1\leq i, j\leq n\} \cup \{z_i^{-1}F_1D^{-1}D F_1^{-1}z_j: 1\leq i, j\leq n\}$, and fix $g_1, \cdots, g_r\in G$ with $g_i^{-1}g_j\not\in F$ for $1\leq i<j\leq r$, we write them as $g_i=h_iz_{\sigma(i)}$ for some map $\sigma: \{1,\cdots, r\}\to \{1,\cdots, n\}$ and $h_i$ in $H$ for all $1\leq i\leq r$.

For every choice of $\cO'_j\in W$, $j=1, \cdots, r$, we can define a (unique) point $y\in A^G$ by setting $(h_j^{-1}y)_{F_1}=(x^{\cO'_j})_{F_1}$ for all $1\leq j\leq r$ and $y_{G\setminus \cup_{j=1}^rh_jF_1}=\bar{x}_{G\setminus \cup_{j=1}^rh_jF_1}$.

Now we claim the following:

(i) $y$ is well defined;

(ii) $y\in X$ (hence $y\in\Delta_X(\bar{x})$ by definition);

(iii) $y\in \cap_{1\leq j\leq r}h_j\cO_j'$.

To check (i), it is enough to show $h_iF_1\cap h_jF_1=\varnothing$ for all $1\leq i<j\leq r$. Assume not, then $h_i^{-1}h_j\in F_1F_1^{-1}$, while $h_i^{-1}h_j=z_{\sigma(i)}g_i^{-1}g_jz_{\sigma(j)}^{-1}$, so $g_i^{-1}g_j\in {z_{\sigma(i)}}^{-1}F_1F_1^{-1}z_{\sigma(j)}\subset F$, which is a contradiction.

To check (ii), we need to show that $y_{gD}\in P$ for all $g\in G$. Fix $g\in G$, we first show that $gD$ can have nonempty intersection with at most one of $h_iF_1$. Suppose not, then $gD\cap h_iF_1\neq\varnothing$ and $gD\cap h_jF_1\neq\varnothing$ for some $i<j$. Then, $g\in h_iF_1D^{-1}\cap h_jF_1D^{-1}$, hence $h_iF_1D^{-1}\cap h_jF_1D^{-1}\neq\varnothing$ and then $h_i^{-1}h_j\in F_1D^{-1}D F_1^{-1}$, so $g_i^{-1}g_j\in {z_{\sigma(i)}}^{-1}F_1D^{-1}D F_1^{-1}z_{\sigma(j)}\subset F$, which is a contradiction. Now we can break our argument in two cases.

Case 1, $gD\subset G\setminus \cup_{j=1}^rh_jF_1$. Then, $y_{gD}=\bar{x}_{gD}\in P$.

Case 2, $gD\cap h_jF_1\neq\varnothing$ for some (and unique) $j$. Then $y_{gD}=y_{(gD\cap h_jF_1)\bigsqcup (gD\cap G\setminus h_jF_1)}$.
If $h_js\in gD\cap h_jF_1$, then $y_{h_js}=(x^{\cO'_j})_s$. If $t\in gD \cap (G\setminus h_jF_1)$, then $h_j^{-1}t\not\in F_1$, hence $y_t=\bar{x}_t=(h_j\bar{x})_t=\bar{x}_{h_j^{-1}t}=(x^{\cO'_j})_{h_j^{-1}t}$. Then, $y_{gD}=(h_jx^{\cO'_j})_{gD}\in P$.

To check (iii), it suffices to check $(h_j^{-1}y)_{F_1}=(x^{\cO'_j})_{F_1}$ for all $j$, which is clear by the definition of $y$.
\end{proof}

A dynamical system $(G,X)$ is called \textit{surjunctive} if every continuous $G$-commuting map $\varphi:X\rightarrow X$ being injective is automatically surjective.
\begin{cor}
Let $G$ be a countably residually finite group and let $G$ acts on a compact metrizable space $X$ by homeomorphisms. Assume that the system $(G,X)$ is expansive and there exists an extension $(G,Y)$ of $(G,X)$ such that $Y\subset A^G$ is a subshift of finite type for some finite set $A$ and $Y$ contains a periodic point $\bar{y}$ such that $\Delta_X(\bar{y})$ is dense in $Y$. Then the system $(G,X)$ is surjunctive.
\end{cor}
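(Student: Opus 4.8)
The plan is to reduce surjunctivity to two facts: that $\Per(X)$ is dense in $X$, and that $X_H$ is finite for every finite-index subgroup $H<G$. Granting both, let $\varphi\colon X\to X$ be a continuous $G$-equivariant injection. For each finite-index $H<G$, $G$-equivariance gives $\varphi(X_H)\subseteq X_H$ (if $hx=x$ then $h\varphi(x)=\varphi(hx)=\varphi(x)$), and an injective self-map of the finite set $X_H$ is a bijection, so $X_H\subseteq\varphi(X)$; hence $\Per(X)=\bigcup_{[G:H]<\infty}X_H\subseteq\varphi(X)$. Since $\varphi(X)$ is a continuous image of a compact space it is closed, and since $\Per(X)$ is dense we get $X=\overline{\Per(X)}\subseteq\varphi(X)$, i.e.\ $\varphi$ is onto.

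For the density of $\Per(X)$, I would push forward periodic points through the factor map. By the definition of an extension there is a continuous surjective $G$-equivariant map $\pi\colon Y\to X$. The hypothesis provides a subshift of finite type $Y\subset A^G$ over the residually finite group $G$ together with a periodic point $\bar y$ whose homoclinic class $\Delta_Y(\bar y)$ is dense in $Y$ (I read the statement's ``$\Delta_X(\bar y)$'' as a typo for $\Delta_Y(\bar y)$), so Lemma~\ref{L-density of periodic points and homoclinic points}(1) applies to $Y$ and yields that $\Per(Y)$ is dense in $Y$. A $G$-equivariant image of a point with finite orbit again has finite orbit, so $\pi(\Per(Y))\subseteq\Per(X)$; and $\pi$ being continuous and surjective with $\Per(Y)$ dense forces $\overline{\pi(\Per(Y))}=\pi(Y)=X$. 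Hence $\Per(X)$ is dense in $X$.

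For the finiteness of $X_H$, I would invoke expansiveness: fix a compatible metric $d$ and an expansive constant $c>0$, so that $x=y$ whenever $d(gx,gy)\le c$ for all $g\in G$. Write $G=\bigsqcup_{i=1}^{n}g_iH$ with $g_1=e_G$ and $n=[G:H]$. For $x\in X_H$ and $g=g_ih$ with $h\in H$ one has $gx=g_ix$, so the map $\iota\colon X_H\to X^n$, $\iota(x)=(g_1x,\dots,g_nx)$, is continuous, and it is injective because $g_1=e_G$; moreover $X_H$ is closed in $X$. If $X_H$ were infinite, then $\iota(X_H)$ would be an infinite subset of the compact space $X^n$, hence not uniformly discrete, so there would exist distinct $x,y\in X_H$ with $\max_i d(g_ix,g_iy)\le c$; but then $d(gx,gy)\le c$ for every $g\in G$, forcing $x=y$, a contradiction. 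Thus $X_H$ is finite, and the proof concludes as in the first paragraph.

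The genuinely dynamical input sits in the last step: everything else is formal once one has Lemma~\ref{L-density of periodic points and homoclinic points} of Section~4 and the standard meaning of ``extension''. The point to watch is precisely that expansiveness (with respect to \emph{some} compatible metric, which is all that is available on an abstract compact metrizable space) already forces each $X_H$ to be finite, via the embedding $X_H\hookrightarrow X^{[G:H]}$.
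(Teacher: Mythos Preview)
Your proof is correct and follows the paper's line: obtain density of $\Per(Y)$ from Lemma~\ref{L-density of periodic points and homoclinic points}, push it forward through the factor map to get $\Per(X)$ dense in $X$, and then conclude surjunctivity from expansiveness together with dense periodic points.

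The only difference is in the last step. The paper simply invokes \cite[Proposition 5.1]{CC2015} (expansive actions with dense periodic points are surjunctive), whereas you unpack that result directly: you show each $X_H$ is finite by embedding $X_H\hookrightarrow X^{[G:H]}$ and using the expansive constant, and then run the standard pigeonhole argument on the finite sets $X_H$. Your argument is thus more self-contained, but the strategy is the same; what you have written is essentially a proof of the cited proposition specialized to the situation at hand.
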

\begin{proof}
Let $\varphi:Y\to X$ be an extension map. Then $\varphi(Per(Y))\subset Per(X)$ because $\varphi$ is $G$-equivariant. From Lemma \ref{L-density of periodic points and homoclinic points}, we know that $Per(Y)$ is dense in Y. And as $\varphi$ is continuous and surjective, one has $Per(X)$ is dense in $X$ and hence applying \cite[Proposition 5.1]{CC2015}, one has $(G,X)$ is surjunctive.
\end{proof}
\begin{definition}
Let $\Omega$ be a non-empty finite subset of $G$. A subshift $X\subset A^G$ is said to have $\Omega$-propagation if for every non-empty finite subset $F$ of $G$ and $x\in A^F$ satisfying $x_{F\cap g\Omega}\in X_{F\cap g\Omega}$ for every $g\in G$ then one has $x\in X_F$. And $X$ is said to have bounded propagation if $X$ has $\Omega$-propagation for some non-empty finite subset $\Omega$ of $G$.
\end{definition}
The definition of bounded propagation for subshifts was introduced by Gromov \cite[page 160]{Gromov99} and was studied extensively in \cite{Fiorenzi,CC2012}.
\begin{example}(\textbf{The generalized golden mean subshifts}) Let $A=\{0,1,\cdots, k\}$ and $F_1,\cdots, F_m$ be non-empty finite subsets of $G$. Let $X(F_1,\cdots, F_m)$ be a subset of $A^G$ consisting of all $x\in A^G$ such that for every $1\leq j\leq m$, $g\in G$, there exists $g_j\in gF_j$ such that $x_{g_j}=0$. Then it is a bounded propagation subshift of $A^G$ \cite[Example 2.8]{CC2012}.
\begin{lem}
\label{L-specification of generalized golden mean shift}
Let $G$ be a finitely generated group with a finite generating set $S$ and $a$ be an element in $G$ with infinite order. Let $A=\{0,1,\cdots, k\}$ and $F_1,\cdots, F_m$ be non-empty finite subsets of $G$. We denote by $X$ the generalized golden mean subshifts with respect to $F_1,\cdots, F_m$. Then $\Delta_X$ has strong $a$-specification.
\end{lem}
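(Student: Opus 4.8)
The plan is to follow the proof of Lemma~\ref{full shift has specification} verbatim, taking as base point the constant configuration $\bar{x}=(0)_G$. First I would observe $\bar{x}\in X$ (every window of $\bar{x}$ consists only of $0$'s, so the defining condition of the generalized golden mean subshift is trivially met) and that it is $G$-invariant. For the density of $\Delta_X(\bar{x})\cap a^{-1}\Delta_X(\bar{x})$ in $X$: given $z\in X$ and a finite $\Omega\subseteq G$, let $y$ agree with $z$ on a finite set $F\supseteq\Omega$ and be $0$ off $F$. Then $y$ differs from $\bar{x}$ on a finite set, and so does $ay$, while $y\in X$ because any window $gF_j$ either meets $G\setminus F$ (where $y$ vanishes, giving the required zero) or is contained in $F$ (where $y=z\in X$). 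Hence $y\in\Delta_X(\bar{x})\cap a^{-1}\Delta_X(\bar{x})$ and $y_\Omega=z_\Omega$.

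Next, fix $r\ge 0$. Put $D:=\max_{1\le j\le m}\max_{f,f'\in F_j}\ell_S(f^{-1}f')$, let $T:=\max\{n\ge 0:\ell_S(a^n)\le D+2r\}$, which is finite because $a$ is non-torsion, set $M_0:=r+\max_{0\le k\le T}\ell_S(a^k)$, and finally define $N(a,r):=\max(r,\,N',\,M_0+D)$, where $N'$ is the constant provided by Lemma~\ref{L-specification of group} for $(a,r)$. Now suppose $x,x'\in\Delta_X(\bar{x})$ satisfy $x_{B(N(a,r))}=x'_{B(N(a,r))}$. Writing $C^+:=\{a^{k}:k\ge 0\}B(r)$ and $C^-:=\{a^{-k}:k\ge 0\}B(r)$, define $y\in A^G$ by $y_g:=x_g$ for $g\in C^+$, $y_g:=x'_g$ for $g\in C^-$, and $y_g:=0$ otherwise. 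This is well defined since $C^+\cap C^-\subseteq B(N')\subseteq B(N(a,r))$ by Lemma~\ref{L-specification of group}, and there $x$ and $x'$ agree. Moreover $y$ vanishes off the (finite) union of the sets where $x\neq\bar{x}$ and where $x'\neq\bar{x}$, so $y\in\Delta_X(\bar{x})$ provided $y\in X$; granting that, $(x,y)\in\Delta^+(a,r)$ and $(x',y)\in\Delta^-(a,r)$ are immediate from the construction (both pairs lie in $\Delta_X$ since $x,x',y$ are in the class $\Delta_X(\bar{x})$).

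It remains to check $y\in X$, i.e.\ that each window $gF_j$ contains a coordinate at which $y$ is $0$. If $gF_j$ meets $G\setminus(C^+\cup C^-)$ we are done; if $gF_j\subseteq C^+$ then $y=x$ on $gF_j$ and we use $x\in X$, and symmetrically if $gF_j\subseteq C^-$. The crucial remaining case is $gF_j\subseteq C^+\cup C^-$ with $gF_j\not\subseteq C^+$ and $gF_j\not\subseteq C^-$; then there are $u\in gF_j\cap C^+$ and $v\in gF_j\cap C^-$, say $u=a^{k_1}h_1$ and $v=a^{-k_2}h_2$ with $k_i\ge 0$ and $h_i\in B(r)$. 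Since $u,v$ lie in the same translate $gF_j$, we have $u^{-1}v=h_1^{-1}a^{-(k_1+k_2)}h_2\in F_j^{-1}F_j$, whence $\ell_S(a^{k_1+k_2})\le D+2r$, so $k_1+k_2\le T$ and therefore $\ell_S(u)\le\ell_S(a^{k_1})+r\le M_0$ and likewise $\ell_S(v)\le M_0$. Consequently $gF_j\subseteq B(M_0+D)\subseteq B(N(a,r))$, where $x$ and $x'$ coincide, so $y|_{gF_j}=x|_{gF_j}$, which contains a $0$ because $x\in X$. This finishes $y\in X$ and hence the proof. The main obstacle is exactly this last case: one must show that a window straddling the two cones $C^+$ and $C^-$ is necessarily confined to a bounded neighbourhood of the identity — and this is precisely where the infinite order of $a$ (through $\ell_S(a^n)\to\infty$) is used — so that the two glued configurations have already been arranged to agree there.
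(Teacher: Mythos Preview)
Your proof is correct and follows essentially the same approach as the paper's: take $\bar{x}=(0)_G$, glue $x$ on the positive cone and $x'$ on the negative cone with $0$'s elsewhere, and handle the only nontrivial case---a window $gF_j$ straddling both cones---by using the infinite order of $a$ to force $gF_j$ into the ball where $x$ and $x'$ already agree. The only cosmetic differences are that the paper invokes Lemma~\ref{L-specification of group} at the enlarged radius $r+r_0$ (with $F_j\subset B(r_0)$) to bound $g$ itself, whereas you re-derive the bound directly via the diameter $D$ of the windows and bound the endpoints $u,v$; and you supply the density argument that the paper simply declares ``clear''.
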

\begin{proof}
For every $r\geq 0$, we put $P^+(a,r):=\{a^{k}: k\geq 0\}B(r)$ and $P^-(a,r):=\{a^{-k}: k\geq 0\}B(r)$. Let $\bar{x}$ be the constant $0$ configuration of $X$ then it is clear that $\Delta_X(\bar{x})\cap a^{-1}\Delta_X(\bar{x})$ is dense in $X$. Choose $r_0\in\N$ such that $F_j\subset B(r_0)$ for every $1\leq j\leq m$. Let $r>0$. By Lemma \ref{L-specification of group}, there exists $M\geq r$ such that $$P^+{(a,r+r_0)}\cap P^-{(a,r+r_0)}\subset B(M).$$
Write $N=M+r_0$, we will prove that for every $x,x'\in \Delta_X(\bar{x})$ such that $x_{B(N)}=x'_{B(N)}$, there exists $y\in\Delta_X(\bar{x})$ such that $(x,y)\in \Delta^+(a,r)$ and $(x',y)\in \Delta^-(a,r)$. Let $x,x'\in \Delta_X(\bar{x})$ such that $x_{B(N)}=x'_{B(N)}$, we define $y:G\to A$ by
\[
  y_g = \left\{
  \begin{array}{@{}c@{\quad}l@{}}
    x_g  & \text{if } g\in P^+{(a,r)}\\
    x'_{g} & \text{if } g\in P^-{(a,r)}\\
   0 & \text{otherwise}\\
  \end{array}\right.
\]
Then $y$ has non-zero values at only finitely many places. Let $g\in G$, $1\leq j\leq m$. It suffices to show that there exists $g_j\in gF_j$ such that $y_{g_j}=0$. If $gF_j\cap (P^+{(a,r)}\cup P^-{(a,r)})=\varnothing$, i.e. $gF_j\subset G\setminus (P^+{(a,r)}\cup P^-{(a,r)})$ then $y_{gF_j}=\{0\}$. Now we consider the case $gF_j\cap (P^+{(a,r)}\cup P^-{(a,r)})\neq\varnothing$. If $gF_j\subset P^+{(a,r)}$ or $gF_j\subset P^-{(a,r)}$ then $y_{gF_j}=x_{gF_j}$ or $y_{gF_j}=x'_{gF_j}$ respectively, and hence we are done. If there exist $f_1, f_2\in F_j$ such that $gf_1\in P^+{(a,r)}$ and $gf_2\in P^-{(a,r)}$, then by assumption on the element $a$, one has $$g\in P^+{(a,r)}f_1^{-1}\cap P^-{(a,r)}f_2^{-1}\subset P^+{(a,r+r_0)}\cap P^-{(a,r+r_0)}\subset B(M).$$
And hence $gF_j\subset B(M+r_0)=B(N)$. As $x_{B(N)}=x'_{B(N)}$, and from the definition of $y$, $y_{g_j}=0$ for some $g_j\in gF_j$.
\end{proof}

\end{example}
\begin{cor}
Let $A=\{0,1,\cdots, k\}$, $G$ be a finitely generated group satisfying the conditions as in Theorem \ref{T-one end is sufficient}, and $F_1,\cdots, F_m$ be non-empty finite subsets of $G$. Let $X\subset A^G$ be the generalized golden mean subshift with respect to $F_1,\cdots, F_m$. Then $(G, X)$ is superrigid.
\end{cor}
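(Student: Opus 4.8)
The plan is to deduce the corollary directly from Theorem \ref{T-one end is sufficient}. Since $G$ is assumed to satisfy ``the conditions as in Theorem \ref{T-one end is sufficient},'' it is finitely generated, non-torsion and one-ended, so it suffices to verify the two dynamical hypotheses of that theorem for the generalized golden mean subshift $X = X(F_1,\dots,F_m)$ --- namely that the $G$-action on $X$ is topologically mixing and that $\Delta_X$ has strong $a$-specification for every $a\in G$ of infinite order --- and then to pass from ``every countable target group'' (what Theorem \ref{T-one end is sufficient} delivers) to ``every discrete target group'' (what superrigidity asks for).

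The strong $a$-specification hypothesis requires no new work: it is exactly the content of Lemma \ref{L-specification of generalized golden mean shift}, which holds for any finitely generated $G$ and any $a\in G$ of infinite order, so I would simply cite it.

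For topological mixing I would invoke Lemma \ref{L-density of periodic points and homoclinic points}(2), whose hypotheses are all available. First, $X$ is a subshift of finite type: it is a bounded-propagation subshift by \cite[Example 2.8]{CC2012}, and one checks directly that it is defined by the window $D=\bigcup_{j=1}^m F_j\cup\{e_G\}$ together with the allowed words $P=\{w\in A^D: w|_{F_j}\text{ contains a }0\text{ for every }j\}$, since $x\in X$ iff $(g^{-1}x)|_D\in P$ for all $g\in G$. Next, the constant configuration $\bar x\equiv 0$ belongs to $X$ and is $G$-invariant, hence is a periodic point with $stab(\bar x)=G$. Finally, $\Delta_X(\bar x)$ is dense in $X$: given $x\in X$ and a finite set $\Omega\subset G$, the configuration agreeing with $x$ on $\Omega$ and equal to $0$ off $\Omega$ is again in $X$ and homoclinic to $\bar x$ --- this is precisely the density already observed at the start of the proof of Lemma \ref{L-specification of generalized golden mean shift}. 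Since $G$ is countably infinite, Lemma \ref{L-density of periodic points and homoclinic points}(2) then gives that $(G,X)$ is topologically mixing of every order, in particular topologically mixing.

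With both hypotheses in hand, Theorem \ref{T-one end is sufficient} shows every continuous cocycle $c\colon G\times X\to H$ with $H$ countable is cohomologous to a homomorphism. To upgrade this to an arbitrary discrete group $H$: for each $g\in G$ the map $c(g,\cdot)$ is continuous on the compact space $X$, hence has finite image, so $c$ takes all its values in the countable subgroup $H_0\le H$ generated by $\bigcup_{g\in G}c(g,X)$; applying the theorem to $c\colon G\times X\to H_0$ produces a continuous transfer map $b\colon X\to H_0\subseteq H$ and a homomorphism $\phi\colon G\to H_0\subseteq H$ with $c(g,x)=b(gx)\phi(g)b(x)^{-1}$, which is exactly triviality over $H$. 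Hence $(G,X)$ is continuous $H$-cocycle rigid for every discrete $H$, i.e. superrigid. I expect no genuine obstacle in this corollary: the substantive content is already packaged into Lemmas \ref{L-specification of generalized golden mean shift} and \ref{L-density of periodic points and homoclinic points}, and the only points needing a moment's care are confirming that $X$ is a subshift of finite type with $e_G$ in its defining window (so that Lemma \ref{L-density of periodic points and homoclinic points} applies) and recording the density of $\Delta_X(\bar x)$, both of which come for free from the cited material.
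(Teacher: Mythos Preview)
Your proposal is correct and follows essentially the same route as the paper: invoke Lemma \ref{L-specification of generalized golden mean shift} for strong $a$-specification, Lemma \ref{L-density of periodic points and homoclinic points}(2) for topological mixing (after noting $X$ is of finite type and $\bar{x}\equiv 0$ has dense homoclinic class), and then apply Theorem \ref{T-one end is sufficient}. Your additional step upgrading from countable to arbitrary discrete targets via the finitely-generated subgroup $H_0$ is a nice touch that the paper leaves implicit; the paper simply cites \cite[Proposition 2.9]{CC2012} for the finite-type property rather than writing down the window explicitly as you do, but otherwise the arguments match.
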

\begin{proof}
Let $\bar{x}$ be the constant $0$ configuration of $X$ then $\bar{x}$ is a periodic point. Applying \cite[Proposition 2.9]{CC2012}, we know that $X$ is a subshift of finite type. Then combining Lemmas \ref{L-density of periodic points and homoclinic points}, \ref{L-specification of generalized golden mean shift}, and Theorem \ref{T-one end is sufficient}, we finish the proof.
\end{proof}
\begin{remark}
Let $G$ be a finitely generated group and $S=\{s_1,\cdots,s_m\}$ be a generating set of $G$. Let $F_j=\{e_G,s_j\}$ and $X(S)\subset \{0,1,\cdots, k\}^G$ the generalized golden mean subshift with respect to $F_1,\cdots, F_m$. Then $X(S)$ consists of all configurations in which the $i$'s are isolated for every $1\leq i\leq k$, i.e. $X(S)$ consists of all elements $x\in \{0,1,\cdots, k\}^G$ such that for every $g\in G$, $x_{gs_j}=0$  whenever $x_g\neq 0$.
\end{remark}

\section{One end is necessary}

In this section, we prove that if $G\in\mathcal{G}$, then $G$ has one end. Let us first observe that the free groups $F_r(r\geq 1)$ do not belong to $\mathcal{G}$. The result should be well-known, but we can not find it in literature. Thus, we present a proof here.

\begin{example}\label{Z is not in G}
Let $G=F_1=\Z$, we consider the continuous cocycle $c: \Z\times (\frac{\Z}{2\Z})^{\Z}\to \frac{\Z}{2\Z}$ defined by setting $c(g, x)=x_g$, where $g$ is the generator of $\Z$. Observe that we have two $x$ (i.e. the one which is constant 1 and the one which is constant 0) such that $gx=x$. If there exist $b: (\frac{\Z}{2\Z})^{\Z}\to \frac{\Z}{2\Z}$ and a group homomorphism $\phi: \Z\to \frac{\Z}{2\Z}$ such that $c(g, x)=-b(gx)+\phi(g)+b(x)$ for all $x\in (\frac{\Z}{2\Z})^{\Z}$, then we apply this equality to such two $x$, we get $\phi(g)=0$ and $\phi(g)=1$, a contradiction.

Similar construction also applies to $G=F_r$, the free groups on $r$ generators, say $s_1, \cdots, s_r$. Indeed, we may consider the cocycle $c: F_r\times (\frac{\Z}{2\Z})^{F_r}\to \frac{\Z}{2\Z}$ defined by setting $c(s_i, x)=x_{s_i}$ for all $x\in (\frac{\Z}{2\Z})^{F_r}$ and all $1\leq i\leq r$. Note that this cocycle is well-defined since there are no relations among these generators. It is not hard to conclude that this cocycle is not trivial by a similar argument as above.
\end{example}

Note that the above construction does not apply to $G=\Z^2$, because if we define $c: \Z^2\times (\frac{\Z}{2\Z})^{\Z^2}\to \frac{\Z}{2\Z}$ by setting $c(a, x)=x_a, c(b, x)=x_b$, where $\Z^2=\langle a, b \rangle$, then $ab=ba$ implies $c(a, bx)+c(b, x)=c(ab, x)=c(ba, x)=c(b, ax)+c(a, x)$ for all $x$. So $x_{b^{-1}a}+x_b=x_{a^{-1}b}+x_a$, but not every $x$ in $(\frac{\Z}{2\Z})^{\Z^2}$ satisfy this relation, so this $c$ is not well-defined on the whole space.

Now let us prove if $G\in\mathcal{G}$, then $G$ has one end. This might be compared with \cite[Corollary 4.2]{petersonsinclair}. 

\begin{proof}[Proof of Theorem \ref{one end is necessary}]

From Specker's characterization for ends of groups via the associated first cohomology groups \cite{Specker} (see its proofs in \cite[Theorem 1]{Dunwoody} or \cite[Theorem 4.6]{Houghton}), we know that if $G$ has more than one end, then $H^1(G, \oplus_GC_2)$ is nontrivial, i.e. there exists a 1-cocycle $c: G\to\oplus_GC_2$ which is not a 1-coboundary, where $C_2=\{\bar{0}, \bar{1}\}$ denotes the cyclic group with two elements under addition. Now, using such $c$, we could define a 1-cocycle $c': G\times (\frac{\Z}{2\Z})^G\to \{\pm 1\}\subset \mathbb{T}$ by setting $c'(g, x)=x(c(g^{-1}))$ for all $g\in G, x\in X=(\frac{\Z}{2\Z})^G$. 

Here, we treat $X$ as a dual group, i.e. $(X, +)=\widehat{\oplus_G{C_2}}$, and we treat $\frac{\Z}{2\Z}=\{\pm 1\}\subset \mathbb{T}$ as a group under multiplication. If $x=(x_g), y=(y_g)\in X$ and $a=\oplus_{g\in F}\bar{1}\in \oplus_G{C_2}$ for some finite set $F\subset G$, then $x(a):=\prod_{g\in F}x_g$ if $F\neq \emptyset$, and $x(a):=1$ if $F=\emptyset$. $x+y\in X$ is defined by $(x+y)_g:=x_gy_g$ for all $g\in G$.  

It is clear that $c'$ is continuous. We claim that $c'$ is not trivial as a $\{\pm 1\}\cong\frac{\Z}{2\Z}$-valued cocycle.

Assume not, then there exist a group homomorphism $\rho: G\to\frac{\Z}{2\Z}$ and a continuous map $b: X\to \frac{\Z}{2\Z}$ such that $c'(g, x)=b(gx)^{-1}\rho(g)b(x)$ for all $g\in G$ and all $x\in X$. Now, we apply the above equation to $x'=(1)_G\in X$, where $1\in\frac{\Z}{2\Z}=\{\pm 1\}$, and note that $gx'=x'$ for all $g\in G$, then we deduce $\rho(g)=1$ for all $g\in G$; hence, $x(c(g^{-1}))=c'(g, x)=b(gx)^{-1}b(x)$ for all $g\in G$ and all $x\in X$. This implies $c$ is a 1-coboundary, the proof is the same as the proof of \cite[Theorem 1.1]{Jiang}, we include it for completeness.

Since $X$ is a dual group, we have $x(c(g^{-1}))y(c(g^{-1}))=(x+y)(c(g^{-1}))$ for all $x, y\in X$, then $b(gx)^{-1}b(x)b(gy)^{-1}b(y)=b(gx+gy)^{-1}b(x+y)$ for all $g\in G$ and all $x, y\in X$, equivalently, $b(x)b(y)b(x+y)^{-1}=b(gx)b(gy)b(gx+gy)^{-1}$ for all $g\in G$ and all $x\in X$.

Now, using the fact that $G\curvearrowright (X\times X, \mu\times \mu)$ is ergodic, where $\mu$ is the Haar measure on $X$, the above implies $b(x)b(y)b(x+y)^{-1}=\lambda$ for some constant $\lambda\in \mathbb{T}$ and $\mu\times \mu$-a.e. $(x, y)\in X\times X$.

Then after replacing $b$ with $\lambda^{-1} b$, which does not change the original equation on $c'$ and $b$, and applying \cite[Lemma 2.7]{Jiang}, we may assume $b\in\widehat{X}=\oplus_{G}C_2$. Then we deduce $x(c(g^{-1}))=x(-g^{-1}b+b)$ for all $x\in X$, hence $c(g)=b-gb$ for all $g\in G$
by Pontryagin duality, i.e. $c: G\to\oplus_GC_2$ is a 1-coboundary, which is a contradiction.

\end{proof}
\begin{remark}
We observe that for continuous cocycles, \cite[Theorem 1.1]{Jiang} still holds. In particular, it implies that if the shift $(G, \mathbb{T}^G)$ is continuous $\mathbb{T}$-cocycle rigid, then $G$ has one end. Indeed, if $G$ has more than one end, $H^1(G, \Z G)=H^1(G, \oplus_G\Z)$ is nontrivial, the same proof as above stills works here after replacing the target group with $\mathbb{T}$. In fact, the proof of Theorem \ref{one end is necessary} is obtained as we try to apply this observation. But since in Theorem \ref{one end is necessary}, the target group is discrete and the base space is a finite set instead of $\mathbb{T}$, we need to take advantage of \cite[Theorem 4.6]{Houghton}.
 \end{remark}
 \begin{proof}[Proof of Corollary \ref{main theorem}]
 Applying Theorems \ref{T-one end is sufficient}, \ref{one end is necessary}, and Lemma \ref{full shift has specification}, we get the corollary.
 \end{proof}

\section{Continuous Orbit Equivalence}\label{section on applications}
In this section we will present an application of Theorem \ref{T-one end is sufficient} to study rigidity of continuous orbit equivalence.
Let $G, H$ be countable groups and $X,Y$ be topological Hausdorff spaces.
\begin{definition}
The system $(G,X)$ is free if $gx\neq x$ for every $g\in G\setminus\{e_G\}, x\in X$; and topologically free if for every $e_G\neq g\in G$, one has $\{x\in X: gx\neq x\}$ is dense in $X$.
\end{definition}
\begin{definition}
Two topological dynamical systems $(G,X)$ and $(H,Y)$ are \textit{(continuous) orbit equivalent} which we denoted by $(G,X)\sim_{COE} (H,Y)$ if there exist continuous maps $b:G\times X \to H$, $c:H\times Y\to G$ and a homeomorphism $\varphi: X\to Y$ with the inverse $\psi:=\varphi^{-1}:Y\to X$ such that
\begin{align}
\varphi(gx)=b(g,x)\varphi(x),\\
\psi(hy)=c(h,y)\psi(y),
\end{align}
for every $x\in X,y\in Y,g\in G,h\in H$.
\end{definition}
\begin{remark}
\label{R-OE of topological free systems implies cocycle}
Note that if the system $(G,X)$ is topologically free then the map $c$ is uniquely determined by (3) and furthermore, it is a cocycle \cite[Remark 1.7 and Lemma 2.8]{Lixin}.
\end{remark}
\begin{definition}
Two topological dynamical systems $(G,X)$ and $(H,Y)$ are \textit{(topologically) conjugate} which we denoted by $(G,X)\sim_{conj} (H,Y)$ if there exist a group isomporphism $\rho:G \to H$ and a homeomorphism $\varphi: X\to Y$ such that $\varphi(gx)=\rho(g)\varphi(x)$, for every $g\in G,x\in X$.
\end{definition}
It is clear that if $(G,X)$ and $(H,Y)$ are topologically conjugate then they are orbit equivalent.
\begin{cor}\label{cocycle rigidity to coe}
Let $G$ be a finitely generated amenable and torsion-free group. Let $A$ be a finite set and $\sigma$ be the left shift action of $G$ on $A^G$ and $X\subset A^G$ a subshift such that the induced action of $\sigma$ on $X$ is topologically free. Assume that the system $(G,X)$ has properties as in Theorem \ref{T-one end is sufficient}. Then if $(G,X)$ is orbit equivalent to another topologically free system $(H,Y)$ then they are conjugate. In particular, if $H$ is a group being non-isomorphic to $G$ then $(G,X)$ is not orbit equivalent to any topologically free system $(H,Y)$.
\end{cor}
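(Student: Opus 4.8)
The plan is to extract continuous cocycle rigidity for $(G,X)$ from Theorem~\ref{T-one end is sufficient} and then invoke \cite[Theorem 1.6]{Lixin}. First I would verify that $(G,X)$ is superrigid. The system $(G,X)$ satisfies the dynamical hypotheses of Theorem~\ref{T-one end is sufficient} by assumption; moreover $G$ is infinite (topological mixing forces this) and torsion-free, hence non-torsion, and $G$ is amenable and not virtually cyclic, so it has one end by Example~\ref{E-groups with one end}(1) (this is anyway part of having the properties of Theorem~\ref{T-one end is sufficient}). Therefore Theorem~\ref{T-one end is sufficient} gives that every continuous cocycle from $G\times X$ into a countable discrete group is cohomologous to a homomorphism.

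Next I would unravel the orbit equivalence. Suppose $(G,X)\sim_{COE}(H,Y)$ with $(H,Y)$ topologically free, witnessed by continuous maps $b\colon G\times X\to H$, $c\colon H\times Y\to G$ and a homeomorphism $\varphi\colon X\to Y$ with $\psi:=\varphi^{-1}$. Reading Remark~\ref{R-OE of topological free systems implies cocycle} for the orbit equivalence with the two systems exchanged, topological freeness of $(H,Y)$ forces $b$ to be a continuous $H$-cocycle for $(G,X)$ (and, dually, topological freeness of $(G,X)$ makes $c$ a continuous $G$-cocycle for $(H,Y)$). By the previous paragraph $b$ is cohomologous to a homomorphism $\rho\colon G\to H$, say $b(g,x)=\beta(gx)^{-1}\rho(g)\beta(x)$ for a continuous $\beta\colon X\to H$. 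I would then form $\varphi'\colon X\to Y$, $\varphi'(x):=\beta(x)\varphi(x)$, and compute $\varphi'(gx)=\beta(gx)b(g,x)\varphi(x)=\rho(g)\varphi'(x)$, so $\varphi'$ intertwines $\sigma$ on $X$ with the $H$-action on $Y$ through $\rho$; also $\rho$ is injective, since $\rho(g)=e_H$ with $g\neq e_G$ would give $\varphi'(gx)=\varphi'(x)$ and hence $gx=x$ on all of $X$, contradicting topological freeness of $(G,X)$.

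The last step --- upgrading this $\rho$-twisted intertwiner to a genuine topological conjugacy, i.e.\ showing $\varphi'$ is a homeomorphism and that $\rho$ is onto (not merely an embedding $G\hookrightarrow H$) --- is the one I expect to be the real obstacle, and it is precisely where torsion-freeness and amenability of $G$ are used, through \cite[Theorem 1.6]{Lixin}, whose hypotheses on the $(G,X)$-side are exactly the continuous cocycle rigidity just established together with topological freeness. Concretely one would check that the $H$-orbits on $Y$ coincide with the $\rho(G)$-orbits (from $h\varphi'(x)=\varphi'(x')$ one gets $\varphi(x')=\beta(x')^{-1}h\beta(x)\varphi(x)\in\varphi(Gx)$, so $x'\in Gx$ and $h\varphi'(x)\in\rho(G)\varphi'(x)$) and then use the torsion-free amenable structure to rule out a proper embedding; I regard this surjectivity step as the crux. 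Granting \cite[Theorem 1.6]{Lixin}, combining it with Theorem~\ref{T-one end is sufficient} yields $(G,X)\sim_{conj}(H,Y)$. The last assertion of the statement is then immediate: topologically conjugate systems have isomorphic acting groups, so if $H\not\cong G$ there is no topologically free $(H,Y)$ orbit equivalent to $(G,X)$.
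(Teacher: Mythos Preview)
Your proposal is correct and follows the paper's approach exactly: the paper's proof is the single line ``It follows from Theorem~\ref{T-one end is sufficient}, Remark~\ref{R-OE of topological free systems implies cocycle} and \cite[Theorem~1.6]{Lixin},'' and you have identified precisely these three ingredients and how they fit together. Your second paragraph (building $\varphi'$, arguing injectivity of $\rho$) is extra commentary that partially unpacks the interior of \cite[Theorem~1.6]{Lixin}; it is not needed once you invoke that theorem as a black box, and indeed your injectivity step tacitly assumes $\varphi'$ is injective, which is not immediate---but since you correctly defer the genuine work to \cite[Theorem~1.6]{Lixin}, this does not affect the validity of your overall argument.
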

\begin{proof}
It follows from Theorem \ref{T-one end is sufficient}, Remark \ref{R-OE of topological free systems implies cocycle} and \cite[Theorem 1.6]{Lixin}.
\end{proof}

\textbf{Acknowledgement:} Part of this paper was carried out when Nhan-Phu Chung visited University of M\"{u}nster on May-July, 2015 and Chungnam National University on August-December, 2015. He is grateful to Wilhem Winter and Keonhee Lee for their warm hospitality at M\"{unster} and Daejeon, respectively. He thanks Xin Li for insightful conversations.  This work was partially supported by a grant from the Simons Foundation for Nhan-Phu Chung. 
We thank Hanfeng Li for many useful comments.

\begin{bibdiv}
\begin{biblist}

\bib{BV}{article}{
   author={Bekka, Mohammed E. B.},
   author={Valette, Alain},
   title={Group cohomology, harmonic functions and the first $L^2$-Betti
   number},
   journal={Potential Anal.},
   volume={6},
   date={1997},
   number={4},
   pages={313--326},
   }

\bib{BH1}{article}{
   author={Boyle, Mike},
   author={Handelman, David},
   title={Orbit equivalence, flow equivalence and ordered cohomology},
   journal={Israel J. Math.},
   volume={95},
   date={1996},
   pages={169--210},
  
}
\bib{BT}{article}{
   author={Boyle, Mike},
   author={Tomiyama, Jun},
   title={Bounded topological orbit equivalence and $C^*$-algebras},
   journal={J. Math. Soc. Japan},
   volume={50},
   date={1998},
   number={2},
   pages={317--329},
  }
 
\bib{BH}{book}{
   author={Bridson, Martin R.},
   author={Haefliger, Andr{\'e}},
   title={Metric spaces of non-positive curvature},
   series={Grundlehren der Mathematischen Wissenschaften [Fundamental
   Principles of Mathematical Sciences]},
   volume={319},
   publisher={Springer-Verlag, Berlin},
   date={1999},

}

 \bib{BCW}{article}{
   author={Brownlowe, Nathan},
   author={Carlsen, Toke Meier},
   author={Whittaker, Michael F.},
   title={Graph algebras and orbit equivalence},
   status={preprint, arXiv:1410.2308},
   }

\bib{CC2010}{book}{
   author={Ceccherini-Silberstein, Tullio},
   author={Coornaert, Michel},
   title={Cellular automata and groups},
   series={Springer Monographs in Mathematics},
   publisher={Springer-Verlag, Berlin},
   date={2010},

}

\bib{CC2012}{article}{
   author={Ceccherini-Silberstein, Tullio},
   author={Coornaert, Michel},
   title={On the density of periodic configurations in strongly irreducible
   subshifts},
   journal={Nonlinearity},
   volume={25},
   date={2012},
   number={7},
   pages={2119--2131},
  }

  \bib{CC2015}{article}{
   author={Ceccherini-Silberstein, Tullio},
   author={Coornaert, Michel},
   title={Expansive actions of countable amenable groups, homoclinic pairs, and the Myhill property},
   status={preprint, arXiv:1508.07553 v4 },
   }
   
\bib{CL}{article}{
   author={Chung, Nhan-Phu},
   author={Li, Hanfeng},
   title={Homoclinic groups, IE groups, and expansive algebraic actions},
   journal={Invent. Math.},
   volume={199},
   date={2015},
   number={3},
   pages={805--858},
  
}

\bib{CT}{article}{
   author={Climenhaga, Vaughn},
   author={Thompson, Daniel J.},
   title={Intrinsic ergodicity beyond specification: $\beta$-shifts, $S$-gap
   shifts, and their factors},
   journal={Israel J. Math.},
   volume={192},
   date={2012},
   number={2},
   pages={785--817},
  
}

\bib{CT1}{article}{
   author={Climenhaga, Vaughn},
   author={Thompson, Daniel J.},
   title={Equilibrium states beyond specification and the Bowen property},
   journal={J. Lond. Math. Soc. (2)},
   volume={87},
   date={2013},
   number={2},
   pages={401--427},
   
}

\bib{Dunwoody}{article}{
   author={Dunwoody, Martin J.},
   title={The ends of finitely generated groups},
   journal={J. Algebra},
   volume={12},
   date={1969},
   pages={339--344},
   }

\bib{Dye}{article}{
   author={Dye, Henry A.},
   title={On groups of measure preserving transformation. I},
   journal={Amer. J. Math.},
   volume={81},
   date={1959},
   pages={119--159},
   
}

   \bib{EW}{book}{
   author={Einsiedler, Manfred},
   author={Ward, Thomas},
   title={Ergodic theory with a view towards number theory},
   series={Graduate Texts in Mathematics},
   volume={259},
   publisher={Springer-Verlag London, Ltd., London},
   date={2011},

}

 \bib{Fiorenzi}{article}{
   author={Fiorenzi, Francesca},
   title={Cellular automata and strongly irreducible shifts of finite type},
   journal={Theoret. Comput. Sci.},
   volume={299},
   date={2003},
   number={1-3},
   pages={477--493},

}

\bib{Furman2}{article}{
   author={Furman, Alex},
   title={Orbit equivalence rigidity},
   journal={Ann. of Math. (2)},
   volume={150},
   date={1999},
   number={3},
   pages={1083--1108},
   
}

\bib{Furman}{article}{
   author={Furman, Alex},
   title={On Popa's cocycle superrigidity theorem},
   journal={Int. Math. Res. Not. IMRN},
   date={2007},
   number={19},
   pages={Art. ID rnm073, 46},
  
}
\bib{Geoghegan}{book}{
   author={Geoghegan, Ross},
   title={Topological methods in group theory},
   series={Graduate Texts in Mathematics},
   volume={243},
   publisher={Springer, New York},
   date={2008},
  
}

\bib{GPS2008}{article}{
   author={Giordano, Thierry},
   author={Matui, Hiroki},
   author={Putnam, Ian F.},
   author={Skau, Christian F.},
   title={Orbit equivalence for Cantor minimal $\Bbb Z^2$-systems},
   journal={J. Amer. Math. Soc.},
   volume={21},
   date={2008},
   number={3},
   pages={863--892},
  
}

\bib{GPS2010}{article}{
   author={Giordano, Thierry},
   author={Matui, Hiroki},
   author={Putnam, Ian F.},
   author={Skau, Christian F.},
   title={Orbit equivalence for Cantor minimal $\Bbb Z^d$-systems},
   journal={Invent. Math.},
   volume={179},
   date={2010},
   number={1},
   pages={119--158},
   
}

\bib{GPS}{article}{
   author={Giordano, Thierry},
   author={Putnam, Ian F.},
   author={Skau, Christian F.},
   title={Topological orbit equivalence and $C^*$-crossed products},
   journal={J. Reine Angew. Math.},
   volume={469},
   date={1995},
   pages={51--111},
   
}

 \bib{Gromov99}{article}{
   author={Gromov, Mikhail},
   title={Endomorphisms of symbolic algebraic varieties},
   journal={J. Eur. Math. Soc. (JEMS)},
   volume={1},
   date={1999},
   number={2},
   pages={109--197},
  }

\bib{Harpe}{book}{
   author={de la Harpe, Pierre},
   title={Topics in geometric group theory},
   series={Chicago Lectures in Mathematics},
   publisher={University of Chicago Press, Chicago, IL},
   date={2000},
   }

\bib{Houghton}{article}{
   author={Houghton, Chris H.},
   title={Ends of groups and the associated first cohomology groups},
   journal={J. London Math. Soc. (2)},
   volume={6},
   date={1972},
   pages={81--92},
  
}

\bib{Ioana}{article}{
   author={Ioana, Adrian},
   title={Cocycle superrigidity for profinite actions of property (T)
   groups},
   journal={Duke Math. J.},
   volume={157},
   date={2011},
   number={2},
   pages={337--367},
   
}

\bib{Jiang}{article}{
   author={Jiang, Yongle},
   title={A remark on $\mathbb{T}$-valued cohomology groups of algebraic group actions},
   journal={arXiv: 1509.08278},
 }

\bib{JM}{article}{
   author={Juschenko, Kate},
   author={Monod, Nicolas},
   title={Cantor systems, piecewise translations and simple amenable groups},
   journal={Ann. of Math. (2)},
   volume={178},
   date={2013},
   number={2},
   pages={775--787},
   
}

\bib{Kam1}{article}{
   author={Kammeyer, Janet Whalen},
   title={A complete classification of the two-point extensions of a
   multidimensional Bernoulli shift},
   journal={J. Analyse Math.},
   volume={54},
   date={1990},
   pages={113--163},
   
}
\bib{Kam2}{article}{
   author={Kammeyer, Janet Whalen},
   title={A classification of the isometric extensions of a multidimensional
   Bernoulli shift},
   journal={Ergodic Theory Dynam. Systems},
   volume={12},
   date={1992},
   number={2},
   pages={267--282},
  
}

 \bib{katokschmidt}{article}{
   author={Katok, Anatole B.},
   author={Schmidt, Klaus},
   title={The cohomology of expansive ${\bf Z}^d$-actions by
   automorphisms of compact, abelian groups},
   journal={Pacific J. Math.},
   volume={170},
   date={1995},
   number={1},
   pages={105--142},

}

\bib{LiPetersonSchmidt}{article}{
   author={Li, Hanfeng},
   author={Peterson, Jesse},
   author={Schmidt, Klaus},
   title={Ergodicity of principal algebraic group actions},
   conference={
      title={Recent trends in ergodic theory and dynamical systems},
   },
   book={
      series={Contemp. Math.},
      volume={631},
      publisher={Amer. Math. Soc., Providence, RI},
   },
   date={2015},
   pages={201--210},
}

\bib{Lixin}{article}{
   author={Li, Xin},
   title={Continuous orbit equivalence},
   status={preprint, arXiv:1503.01704},
   }
   
\bib{LS}{article}{
   author={Lind, Douglas},
   author={Schmidt, Klaus},
   title={Homoclinic points of algebraic ${\bf Z}^d$-actions},
   journal={J. Amer. Math. Soc.},
   volume={12},
   date={1999},
   number={4},
   pages={953--980},
   
}
		   
\bib{MM2014a}{article}{
   author={Matsumoto, Kengo},
   author={Matui, Hiroki},
   title={Topological full groups of $C^*$-algebras arising from
   $\beta$-expansions},
   journal={J. Aust. Math. Soc.},
   volume={97},
   date={2014},
   number={2},
   pages={257--287},
  
}
		
\bib{MM2014b}{article}{
   author={Matsumoto, Kengo},
   author={Matui, Hiroki},
   title={Continuous orbit equivalence of topological Markov shifts and
   Cuntz-Krieger algebras},
   journal={Kyoto J. Math.},
   volume={54},
   date={2014},
   number={4},
   pages={863--877},
  
}

\bib{Matui2012}{article}{
   author={Matui, Hiroki},
   title={Homology and topological full groups of \'etale groupoids on
   totally disconnected spaces},
   journal={Proc. Lond. Math. Soc. (3)},
   volume={104},
   date={2012},
   number={1},
   pages={27--56},
  }
  
\bib{Matui2013}{article}{
   author={Matui, Hiroki},
   title={Some remarks on topological full groups of Cantor minimal systems
   II},
   journal={Ergodic Theory Dynam. Systems},
   volume={33},
   date={2013},
   number={5},
   pages={1542--1549},
   }
   
\bib{Matui2015}{article}{
   author={Matui, Hiroki},
   title={Topological full groups of one-sided shifts of finite type},
   journal={J. Reine Angew. Math.},
   volume={705},
   date={2015},
   pages={35--84},
  }
  
\bib{MST}{article}{
   author={Medynets, Konstantin},
   author={Sauer, Roman},
   author={Thom, Andreas}
   title={Cantor systems and quasi-isometry of groups},
   status={preprint, arxiv:1508.07578},
   }

\bib{MV}{article}{
   author={Moon, S.},
   author={Valette, A.},
   title={Non-properness of amenable actions on graphs with infinitely many
   ends},
   conference={
      title={Ischia group theory 2006},
   },
   book={
      publisher={World Sci. Publ., Hackensack, NJ},
   },
   date={2007},
   pages={227--233},
  
}

\bib{Pavlov}{article}{
   author={Pavlov, Ronnie},
   title={On intrinsic ergodicity and weakenings of the specification
   property},
   journal={Adv. Math.},
   volume={295},
   date={2016},
   pages={250--270},
   
}

\bib{petersonsinclair}{article}{
   author={Peterson, Jesse},
   author={Sinclair, Thomas},
   title={On cocycle superrigidity for Gaussian actions},
   journal={Ergodic Theory Dynam. Systems},
   volume={32},
   date={2012},
   number={1},
   pages={249--272},

}

\bib{PetersonThom}{article}{
   author={Peterson, Jesse},
   author={Thom, Andreas},
   title={Group cocycles and the ring of affiliated operators},
   journal={Invent. Math.},
   volume={185},
   date={2011},
   number={3},
   pages={561--592},
   }

\bib{Popa1}{article}{
   author={Popa, Sorin},
   title={Deformation and rigidity for group actions and von Neumann
   algebras},
   conference={
      title={International Congress of Mathematicians. Vol. I},
   },
   book={
      publisher={Eur. Math. Soc., Z\"urich},
   },
   date={2007},
   pages={445--477},
  
}
\bib{Popa2}{article}{
   author={Popa, Sorin},
   title={Cocycle and orbit equivalence superrigidity for malleable actions
   of $w$-rigid groups},
   journal={Invent. Math.},
   volume={170},
   date={2007},
   number={2},
   pages={243--295},
  }
  \bib{Popa2008}{article}{
   author={Popa, Sorin},
   title={On the superrigidity of malleable actions with spectral gap},
   journal={J. Amer. Math. Soc.},
   volume={21},
   date={2008},
   number={4},
   pages={981--1000},
   }

\bib{Ruelle}{article}{
   author={Ruelle, David},
   title={Statistical mechanics on a compact set with $Z^{v}$ action
   satisfying expansiveness and specification},
   journal={Trans. Amer. Math. Soc.},
   volume={187},
   date={1973},
   pages={237--251},
  
}

\bib{Schmidt95}{article}{
   author={Schmidt, Klaus},
   title={The cohomology of higher-dimensional shifts of finite type},
   journal={Pacific J. Math.},
   volume={170},
   date={1995},
   number={1},
   pages={237--269},

}

\bib{Singer}{article}{
   author={Singer, Isadore M.},
   title={Automorphisms of finite factors},
   journal={Amer. J. Math.},
   volume={77},
   date={1955},
   pages={117--133},
   
}

\bib{Specker}{article}{
   author={Specker, Ernst},
   title={Die erste Cohomologiegruppe von \"Uberlagerungen und
   Homotopie-Eigenschaften dreidimensionaler Mannigfaltigkeiten},
   language={German},
   journal={Comment. Math. Helv.},
   volume={23},
   date={1949},
   pages={303--333},
   }

\bib{Stallings}{article}{
   author={Stallings, John R.},
   title={On torsion-free groups with infinitely many ends},
   journal={Ann. of Math. (2)},
   volume={88},
   date={1968},
   pages={312--334},
  
}

\bib{Thompson}{article}{
   author={Thompson, Daniel J.},
   title={Irregular sets, the $\beta$-transformation and the almost
   specification property},
   journal={Trans. Amer. Math. Soc.},
   volume={364},
   date={2012},
   number={10},
   pages={5395--5414},
   
}

\bib{Zimmer}{book}{
   author={Zimmer, Robert J.},
   title={Ergodic theory and semisimple groups},
   series={Monographs in Mathematics},
   volume={81},
   publisher={Birkh\"auser Verlag, Basel},
   date={1984},
   pages={x+209},
  
}

\end{biblist}
\end{bibdiv}
\end{document}